\documentclass{article}

\makeatletter
\setlength{\voffset}{-15pt} 
\setlength{\hoffset}{-15pt} 
\setlength{\topmargin}{0pt} 
\setlength{\oddsidemargin}{28pt} 
\setlength{\evensidemargin}{0pt}
\setlength{\marginparwidth}{0pt}

\setlength{\textheight}{671pt}
\setlength{\textwidth}{453pt}

\setlength{\paperheight}{842pt}
\setlength{\paperwidth}{595pt}
\makeatother

\usepackage[backend=biber,giveninits=true,maxbibnames=9,maxcitenames=6]{biblatex}
\bibliography{../refs}

\usepackage{enumerate}

\usepackage[ruled, linesnumbered]{algorithm2e}
\usepackage[table]{xcolor}

\usepackage{booktabs}
\usepackage{longtable}

\SetCommentSty{mycommfont}
\SetKwProg{Fn}{Function}{:}{}

\usepackage{subcaption}

\usepackage{amsmath}
\usepackage{amssymb}
\usepackage{hyperref}

\usepackage{amsthm}
\theoremstyle{definition}
\newtheorem{theorem}{Theorem}[section]
\newtheorem{definition}[theorem]{Definition}
\newtheorem{lemma}[theorem]{Lemma}
\newtheorem{proposition}[theorem]{Proposition}
\newtheorem{corollary}[theorem]{Corollary}
\newtheorem{remark}[theorem]{Remark}
\newtheorem{example}[theorem]{Example}

\DeclareMathOperator{\clamp}{clamp}
\DeclareMathOperator{\Id}{I}
\DeclareMathOperator*{\argmin}{argmin}

\newcommand{\bbR}{\mathbb{R}}

\newcommand{\bbM}{\mathbb{M}}

\newcommand{\bbN}{\mathbb{N}}

\newcommand{\bbRn}{\mathbb{R}^n}

\newcommand{\bbRm}{\mathbb{R}^m}
\newcommand{\bbRnxn}{\mathbb{R}^{n {\times} n}}

\newcommand{\xbar}{\bar{x}}
\newcommand{\ybar}{\bar{y}}
\newcommand{\tbar}{\bar{t}}

\newcommand{\setto}{\rightrightarrows}

\newcommand{\lparan}{\symbol{40}}
\newcommand{\rparan}{\symbol{41}}
\DeclareMathOperator{\dist}{dist}

\usepackage{tikz}
\usepackage{pgfplots}
\pgfplotsset{compat=1.18}
\usetikzlibrary{arrows.meta}
\usetikzlibrary{backgrounds}
\usepgfplotslibrary{patchplots}
\usepgfplotslibrary{fillbetween}
\pgfplotsset{%
    layers/standard/.define layer set={%
        background,axis background,axis grid,axis ticks,axis lines,axis tick labels,pre main,main,axis descriptions,axis foreground%
    }{
        grid style={/pgfplots/on layer=axis grid},%
        tick style={/pgfplots/on layer=axis ticks},%
        axis line style={/pgfplots/on layer=axis lines},%
        label style={/pgfplots/on layer=axis descriptions},%
        legend style={/pgfplots/on layer=axis descriptions},%
        title style={/pgfplots/on layer=axis descriptions},%
        colorbar style={/pgfplots/on layer=axis descriptions},%
        ticklabel style={/pgfplots/on layer=axis tick labels},%
        axis background@ style={/pgfplots/on layer=axis background},%
        3d box foreground style={/pgfplots/on layer=axis foreground},%
    },
}
\usepackage{adjustbox}

\title{A Newton-Kantorovich Inverse Function Theorem in Quasi-Metric Spaces}
\author{Titus Pinta}

\begin{document}
\maketitle

\begin{abstract}
  The purpose of this work is to investigate root finding problems defined on
  (quasi-)metric spaces, and ranging in Euclidean spaces. The motivation for this
  line of inquiry stems from recent models in biology and phylogenetics, where
  problems of great practical significance are cast as optimization problems on
  (quasi-)metric spaces. We investigate a minimal algebraic setup that allows us to
  study a notion of differentiability suitable for Newton-type methods, called
  Newton differentiability. This notion of differentiability benefits from
  calculus rules and is sufficient to prove superlinear convergence of a
  Newton-type method. Finally, a Newton-Kantorovich-type theorem provides an
  inverse function result, applicable on (quasi-)metric spaces.
\end{abstract}

\textbf{Keywords:} Newton-type Methods, Quasi-Metric Spaces, Newton-Kantorovich
Theorem

\textbf{MSC: 58C15, 90C53, 30L99}

\section{Introduction}
In the last couple of decades, metric spaces have found themselves at the
forefront of optimization and analysis research. A wide range of problems with
important applications in physics, chemistry, and biology require the generality
of metric spaces as a basis for their formulation. In~\cite{LueGatNyeHuc21Wald},
Lueg et al.\ provided a geometrical interpretation to the space of phylogenetic
trees, allowing for the problem of interpolating between two such trees,
constructed on different genetic data, to be formulated as a Fréchet mean problem
on a metric space. The space of protein structures has been given a
\lparan{}pseudo-\rparan{}metric structure in~\cite{RogHen03Anew}, and protein optimal design
problems can be formulated with respect to this structure. This work fits
into the broader topic of knot theory, where metric-geometric aspects have
blended successfully with optimization notions. In the study of our universe,
computational methods for solving shape optimization problems turned out to be necessary. Such
optimization problems for subsets of Finsler manifolds have been solved
in~\cite{ButVel13Shap} and are natural examples of optimization on metric
spaces.

Another key example is provided by problems defined on trees and graphs, as
presented in the recent compendium~\cite{Gol18Opti}.
In this area, ad-hoc combinatorial algorithms and heuristics have long represented
the state of the art. Integer programming is another area with significant
algorithmic development. The recent work on understanding the metric structure
of trees, latices, and graphs can help develop continuous optimization inspired
algorithms. Cyclic projections in Hadamard spaces have been studied
in~\cite{LytPet22Cycl}. Lauster et al.\ developed a fixed point theory for
nonexpansive operators and proximal splitting algorithms in spaces with bounded
curvature in~\cite{BerLauLuk22Alpha} and~\cite{LauLuk21Conv}. This fixed
point approach originates in the work of De Giorgi on minimizing
movements in metric spaces (now called proximity operators in the optimization community), adapted in~\cite{AmbGigSavGrad}.

The interest in quasi-metric spaces, as opposed to just metric spaces, comes
rather naturally from the fact that all the properties employed in the
analysis of nonlinear optimization algorithms are also satisfied by these
more abstract spaces, thus allowing for more general results. The recent
work~\cite{DanSepVen20Asym} extends the construction of free Lipschitz
spaces to quasi-metric spaces, thus showing that such spaces are fruitful
ground for theoretical analysis results.

On the algorithmic side, Definition~\ref{def:newton quasi metric spaces}
from our work develops a Newton-type, superlinearly
convergent algorithm for root finding problems on quasi-metric spaces.
Quasi-metric spaces do not provide a canonical notion of invertible linear maps,
and as such this concept has to be narrowly defined for our purposes.
Riemannian optimization, masterfully presented in the monograph of
Absil~\cite{AbsMahSep08Opti}, provides a blueprint for defining our algebraic
construct, in the Riemannian exponential and logarithm. The main analytical
technique giving rise to superlinear convergence of Newton-type methods
consists of fixed point iterations of operators derived from Newton
differentiable mappings.

In parallel with this algorithmic and fixed point theoretical development, the
basis for an analytical framework in metric spaces has been laid down. A
consensus on the interpretation of gradients in metric spaces has been
established by Hajłasz and Heinonen et al.\ in~\cite{Haj96Sobo,Haj03Sobo,HeiKosSga15Newm}.
The notion of metric slope, introduced by De Giorgi in~\cite{DegMarTos89Prob}, provides an alternative approach to adapting
differential calculus to metric spaces and was presented in~\cite{LauOtt16Conv}
and in the monograph~\cite{AmbGigSavGrad}.
Another area of work consists in exploiting group structures, as seen in
the monograph~\cite{DruKap18Geom}.

The existence of solutions to optimization problems in metric spaces has
attracted the interest of the variational analysis community. The
monograph~\cite{Zas10Opti} collects a large variety of such results. Newton-type
methods have been successfully used in order to obtain inverse function theorems.
The works of Smale~\cite{Sma86Newt} and Kantorovich~\cite{Kan48Func} provide
classic examples of these ideas. The work of Kantorovich has been successfully
applied to nonsmooth problems, defined on Euclidean spaces by Cibulka et
al.\ in~\cite{CibDonPreVelRou18Kant}. Continuing in the realm of nonsmooth
problems defined on linear spaces, Páles used Ekeland's principle for
problems on Banach spaces. A Kantorovich-type result, from
Theorem~\ref{thm:kant}, for equations defined
on quasi-metric spaces stands as the crowning achievement of this chapter.

Calculus rules are vital for algebraic manipulation of the objects involved
in optimization. When dealing with smooth functions, calculus is a well
established discipline, dating to Leibniz and Newton. In recent optimization
work, the calculus of semismooth functions has been collected,
see~\cite{Mov14Nons}. Another approach at nonsmooth calculus is provided by
the work of Bolte and Pauwels, in~\cite{Bol21Cons}. Approaching optimization algorithms
from the point of view of fixed point theory, the paper by Luke, Thao, and
Tham~\cite{LukThaTam18Quan}
provides calculus rules for nonexpansive mappings. The seminal work of
Elliot~\cite{Ell18Thes} shows that such calculus rules are the essence
of automated differentiation, and, in turn, of implementing efficient algorithms.

The first section of this work handles the basic notions and definitions while
helping to fix the notation. The algebraic constructs required by our
Newton-type method follow in the next section. Newton differentiability and
the calculus of Newton differentiability are the subject of the third section,
while the forth section analyses the associated Newton-type method. Forthwith,
the fifth section presents the main result of this work, namely the
Newton-Kantorovich-type inverse function theorem. This result is then followed
by a very simple example application of the Newton type method in the sixth section. The last section of the
article deals with drawing the conclusions.

\subsection{Definitions and Basic Properties}
\begin{definition}
  A space $\bbM$ together with a mapping $\dist:\bbM \times \bbM \to \lbrack 0, \infty \rparan$ is called
  a {\em quasi metric space\/} if the following properties hold:
  \begin{enumerate}[QMS1:]
  \item
    \begin{equation*}
      \forall x, y \in \bbM, x \ne y,\quad \dist(x, y) > 0,
    \end{equation*}
  \item
    \begin{equation*}
      \forall x \in \bbM,\quad \dist(x, x) = 0,
    \end{equation*}
  \item
    \begin{equation*}
      \forall x, y, z \in \bbM,\quad \dist(x, z) \le \dist(x, y) + \dist(y, z),
    \end{equation*}
  \end{enumerate}

  We denote the balls in a quasi-metric space by
  \begin{equation*}
    B_r(x) = \{y \in \bbM~|~\dist(y, x) < r\}
  \end{equation*}
  and
  \begin{equation*}
    B_r[x] = \{y \in \bbM~|~\dist(y, x) \le r\}.
  \end{equation*}
\end{definition}
\begin{remark}
  The difference between metric spaces and quasi-metric spaces lies in the lack
  of symmetry for the distance function. As such, any metric space is also a
  quasi-metric space.
\end{remark}

\begin{definition}[Distance between Points and Subsets]
  Let $\bbM$ be a quasi-metric space and $A \subseteq \bbM$ be a subset. The {\em distance between
    a point $x \in \bbM$ and the subset $A$\/} is defined by
  \begin{equation*}
    \dist(x, A) = \inf_{a \in A} \dist(x, a),
  \end{equation*}
  or
  \begin{equation*}
    \dist(A, x) = \inf_{a \in A} \dist(a, x).
  \end{equation*}
  Let $A, B \subseteq \bbM$. The {\em distance between $A$
    and $B$\/} is defined by
  \begin{equation*}
    \dist(A, B) = \max \{\sup_{a \in A}\dist(a, B), \sup_{b \in B}\dist(A, b)\}.
  \end{equation*}
\end{definition}

For the remaining of this work we will consider the topology generated by
the open balls in $\bbM$. The behavior of such topological spaces has been studied
in~\cite{Kel63Bito}. The topology induces a notion of convergence for sequences.
\begin{definition}
  A sequence ${\{x^k\}}_{k \in \bbN}$ in $\bbM$ is called {\em convergent to $\xbar \in \bbM$\/} if
  \begin{equation*}
    \lim_{k \to \infty} \dist(x^k, \xbar) = 0.
  \end{equation*}
\end{definition}

In order for this work to remain self contained, we recall
the following definition of completeness.
\begin{definition}
  A sequence ${\{x^k\}}_{k \in \bbN}$ in $\bbM$ is called {\em Cauchy\/} if for every
  $\varepsilon > 0$ there is $N \in \bbN$ such that for all $m > N$ and $n > N$,
  $\dist(x^n, x^m) < \varepsilon$.
  A quasi-metric space $(\bbM, \dist)$ is called {\em complete\/} if every Cauchy
  sequence with elements in $\bbM$ is convergent.
\end{definition}

The properties of balls in quasi-metric spaces yield an important topological
property that will be needed in the proof of the Banach Fixed Point Theorem.
\begin{lemma}\label{lema:quasi-metric intersecrtion of infinite balls}
  Let $\xbar \in \bbM$ and ${\{r_k\}}_{k \in \bbN}$ be a sequence of real numbers. If
  $\lim_{k \to \infty}r_k = 0$, then
  \begin{equation*}
    \bigcap_{k \in \bbN}B_{r_k}[\xbar] = \{\xbar\}.
  \end{equation*}
\end{lemma}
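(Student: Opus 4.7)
The plan is to prove set equality by showing both inclusions, exploiting only the definition of the closed ball, axioms QMS1 and QMS2, and the convergence of $r_k$ to $0$.

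For the inclusion $\{\xbar\} \subseteq \bigcap_{k \in \bbN} B_{r_k}[\xbar]$, I would simply invoke QMS2 to write $\dist(\xbar, \xbar) = 0 \le r_k$ for every $k$, which places $\xbar$ in each closed ball. Note that the radii $r_k$ are implicitly nonnegative, since the sequence converges to $0$ while the balls are only meaningfully defined for nonnegative radii; if one wishes to be fully rigorous, one may discard the finite initial segment of $k$ for which $r_k < 0$, as this does not affect the infinite intersection.

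For the reverse inclusion, I would take an arbitrary $y \in \bigcap_{k \in \bbN} B_{r_k}[\xbar]$ and show that $y = \xbar$. By definition of the closed ball, $\dist(y, \xbar) \le r_k$ holds for every $k \in \bbN$. Passing to the limit and using $r_k \to 0$, together with the fact that $\dist$ is nonnegative, one obtains $\dist(y, \xbar) = 0$. The contrapositive of QMS1 then forces $y = \xbar$.

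There is no serious obstacle here; the only mild subtlety is that quasi-metrics are not symmetric, so one has to be careful that the ball $B_{r_k}[\xbar]$ is defined via $\dist(y, \xbar)$ rather than $\dist(\xbar, y)$. This is harmless because QMS1 is stated symmetrically in its two arguments, so the implication $\dist(y, \xbar) = 0 \Rightarrow y = \xbar$ still holds. The proof therefore reduces to two short lines once the asymmetry is accounted for.
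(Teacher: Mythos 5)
Your proof is correct and takes essentially the same route as the paper: the paper proves the reverse inclusion by contradiction (assume $y \ne \xbar$, fix $\varepsilon$ with $\dist(y,\xbar) > \varepsilon$, then find $N$ with $r_k < \varepsilon$ for $k \ge N$), while you pass directly to the limit, a trivially equivalent reformulation. One small nit on your aside: discarding the indices with $r_k < 0$ does in fact change the intersection (an empty ball would force the whole intersection to be empty), so the cleaner reading is that the hypothesis implicitly requires $r_k \ge 0$ --- but this is an issue with the lemma's statement, not with your argument.
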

\begin{proof}
  Clearly, $\xbar \in B_{r_k}[\xbar]$ for any $k \in \bbN$, so
  \begin{equation*}
    \bigcap_{k \in \bbN}B_{r_k}[\xbar] \supseteq \{\xbar\}.
  \end{equation*}
  Next, assume $y \in \bigcap_{k \in \bbN}B_{r_k}[\xbar]$ with $y \ne \xbar$, so there exists $\varepsilon > 0$
  with $\dist(y, \xbar) > \varepsilon$ and $N \in \bbN$ with $r_k < \varepsilon$ for all $k \ge N$. This
  shows that $y \not\in B_{r_k}[\xbar]$ for any $k \ge N$, thus contradicting
  the assumption. It follows that the considered intersection is a singleton,
  completing the proof.
\end{proof}

With quasi-metric spaces as a topological background, we can define the fixed
point iterations of set-valued operators and describe their behavior.
\begin{definition}
  Let $T:U \subseteq \bbM \setto \bbM$, then a point $\xbar \in U$ is called a {\em fixed point\/} if
  $\xbar \in T(\xbar)$.
\end{definition}

The speed of the convergence to a limit point can be quantified, based on the
following definitions.
\begin{definition}[Convergence Rates]
  Let $(\bbM, \dist)$ be a quasi-metric space.
  A convergent sequence ${\{x^k\}}_{k \in \bbN} \subseteq \bbM$ with $\lim_{k \to \infty}x^k = \xbar$ is
  called {\em linearly convergent\/} if there exists $c < 1$ such that
  \begin{equation*}
    \forall k \in \bbN,\quad \dist(x^{k+1}, \xbar) \le c\dist(x^k, \xbar).
  \end{equation*}
  A convergent sequence ${\{x^k\}}_{k \in \bbN}$ with $\lim_{k \to \infty}x^k = \xbar$ is called
  {\em superlinearly convergent\/} if there exists a sequence
  ${\{c^k\}}_{k \in \bbN}$ such that
  \begin{equation*}
    \forall k \in \bbN,\quad \dist(x^{k+1}, \xbar) \le c^k \dist(x^k, \xbar)
  \end{equation*}
  with $\lim_{k \to \infty} c^k = 0$.
  A convergent sequence ${\{x^k\}}_{k \in \bbN}$ with $\lim_{k \to \infty}x^k = \xbar$ is called
  {\em convergent with rate $\gamma > 1$\/} if there exists $c < 1$ such that
  \begin{equation*}
    \forall k \in \bbN,\quad \dist(x^{k+1}, \xbar) \le c {\dist(x^k, \xbar)}^\gamma.
  \end{equation*}
  A convergent sequence with rate 2 is called {\em quadratically convergent}.
  A convergent sequence ${x^k}_{k \in \bbN}$ with $\lim_{k \to \infty}x^k = \xbar$ is called
  {\em convergent with super-rate $\gamma > 1$\/} if there exists a sequence
  ${\{c^k\}}_{k \in \bbN}$ such that
  \begin{equation*}
    \forall k \in \bbN,\quad \dist(x^{k+1}, \xbar) \le c^k {\dist(x^k, \xbar)}^\gamma
  \end{equation*}
  with $\lim_{k \to \infty} c^k = 0$.
\end{definition}

In order to develop a system capable of analyzing the behavior of fixed point
operators, we need to recall multiple definitions relating to the smoothness
properties of operators.
\begin{definition}
  A set value mapping $T: U \subset \bbM \setto \bbM$ is called {\em proper\/} on $U$ if
  $\nexists x \in U$ such that $F(x) = \emptyset$.
\end{definition}

\begin{definition}
  A mapping $T:U \subseteq \bbM \setto \bbM$ is called
  \begin{enumerate}
  \item {\em Lipschitz continuous on $U$\/}
    if there exists a constant $L \ge 0$ such that
    \begin{equation*}
      \forall x, \xbar \in U, y \in T(x), \ybar \in T(\xbar),\quad \dist(y, \ybar) \le L\dist(x, \xbar),
    \end{equation*}

  \item {\em contraction on $U$\/} if
    there exists a constant $c < 1$ such that
    \begin{equation*}
      \forall x, \xbar \in U, y \in T(x), \ybar \in T(\xbar),\quad \dist(y, \ybar) \le c\dist(x, \xbar),
    \end{equation*}

  \item {\em Hölder continuous on $U$\/} if
    there exist constants $L \ge 0$ and $\alpha > 0$ such that
    \begin{equation*}
      \forall x, \xbar \in U, y \in T(x), \ybar \in T(\xbar),\quad \dist(y, \ybar) \le L{\dist(x, \xbar)}^\alpha.
    \end{equation*}
  \end{enumerate}
\end{definition}

These definitions have weaker, pointwise analogues that provide an appropriate
environment for the convergence analysis presented in most of this work.
\begin{definition}
  A mapping $T:U \subseteq \bbM \setto \bbM$ is called
  \begin{enumerate}
  \item {\em pointwise Lipschitz continuous at $\xbar$\/}
    if there exists a constant $L \ge 0$ such that
    \begin{equation*}
      \forall x \in U, y \in T(x), \ybar \in T(\xbar),\quad \dist(y, \ybar) \le L\dist(x, \xbar),
    \end{equation*}

  \item {\em quasi-contraction at $\xbar$\/} if
    there exists a constant $c < 1$ such that
    \begin{equation*}
      \forall x \in U, y \in T(x), \ybar \in T(\xbar),\quad \dist(y, \ybar) \le c\dist(x, \xbar),
    \end{equation*}

  \item {\em pointwise Hölder continuous on $\xbar$\/} if
    there exist constants $L \ge 0$ and $\alpha > 0$ such that
    \begin{equation*}
      \forall x \in U, y \in T(x), \ybar \in T(\xbar),\quad \dist(y, \ybar) \le L{\dist(x, \xbar)}^\alpha.
    \end{equation*}
  \end{enumerate}
\end{definition}

\begin{remark}
  A mapping is a {\lparan}quasi-{\rparan}contraction at $\xbar$ if and only if it is
  (pointwise) Lipschitz continuous at $\xbar$ with constant $L < 1$.
\end{remark}

In the study of set-valued mappings it suffices to consider the existence of
a smooth selection in order to characterize the behavior of fixed point
iterations. For this purpose we provide the next definitions.
\begin{definition}\label{d:holder-selection}
  A set-valued mapping $F:U \subseteq \bbM \setto \mathcal{M}$ has a
  \begin{enumerate}
  \item {\em (pointwise) Lipschitz selection}, if there exists a (pointwise)
    Lipschitz mapping $f:U \to \mathcal{M}$ with $f(x) \in F(x)$ for all $x \in U$,
  \item {\em (pointwise) Hölder selection}, if there
    exists a (pointwise) Hölder mapping $f:U \to \bbRm$ with $f(x) \in F(x)$ for all
    $x \in U$,
  \item {\em {\lparan}quasi-{\rparan}contractive selection}, if there exists a
    {\lparan}quasi-{\rparan}contractive mapping $f:U \to \mathcal{M}$ with $f(x) \in F(x)$ for all
    $x \in U$.
  \end{enumerate}
\end{definition}

\begin{lemma}
  Let $T: U \subseteq \bbM \setto \bbM$ be pointwise Lipschitz at $\xbar \in \bbM$, then it is single-valued
  at $\xbar$.
\end{lemma}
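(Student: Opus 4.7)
The plan is to exploit the pointwise Lipschitz condition with the special choice $x = \xbar$, which forces the right-hand side of the Lipschitz inequality to collapse to zero.

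More precisely, I would start by unpacking the hypothesis: there exists $L \ge 0$ such that for all $x \in U$, all $y \in T(x)$, and all $\ybar \in T(\xbar)$,
\begin{equation*}
  \dist(y, \ybar) \le L \dist(x, \xbar).
\end{equation*}
Specializing to $x = \xbar \in U$ and letting $y, \ybar$ range over $T(\xbar)$, I obtain
\begin{equation*}
  \dist(y, \ybar) \le L \dist(\xbar, \xbar) = 0
\end{equation*}
by property QMS2. Since distances are non-negative, this gives $\dist(y, \ybar) = 0$.

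The next step is to invoke QMS1, which is the contrapositive of exactly what is needed: if $y \ne \ybar$, then $\dist(y, \ybar) > 0$. Combined with $\dist(y, \ybar) = 0$, this forces $y = \ybar$. Since $y$ and $\ybar$ were arbitrary elements of $T(\xbar)$, we conclude that $T(\xbar)$ contains at most one element, i.e.\ it is single-valued at $\xbar$.

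The main obstacle is essentially nonexistent; the proof is a two-line application of QMS1 and QMS2 to the Lipschitz inequality evaluated at the base point. The only subtlety worth mentioning is that the argument shows $T(\xbar)$ has at most one element; if one wishes to rule out $T(\xbar) = \emptyset$, one would need an additional properness assumption (as in the preceding definition of proper set-valued mappings), but the lemma as stated is already established by the above.
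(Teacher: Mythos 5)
Your proof is correct and follows essentially the same approach as the paper: specialize the pointwise Lipschitz inequality to the base point $\xbar$, observe that the right-hand side vanishes by QMS2, and conclude equality of any two elements of $T(\xbar)$ via QMS1. Your remark about ruling out $T(\xbar) = \emptyset$ via properness is a sensible aside but not needed for the lemma as stated.
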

\begin{proof}
  Let $\ybar_1, \ybar_2 \in T(\xbar)$, so
  \begin{equation*}
    \dist(\ybar_1, \ybar_2) \le L\dist(\xbar, \xbar) \le 0.
  \end{equation*}
\end{proof}
\begin{example}
  The mapping $F:\bbR ^2 \setto \bbR^2$ defined by
  \begin{equation*}
    F(x,y) = \{(x, 0), (y, 0)\}
  \end{equation*}
  is pointwise Lipschitz continuous at $(\xbar, \ybar) = (0, 0)$, as can be seen from
  \begin{equation*}
    \|(x, 0) - (0, 0)\| = x \le \sqrt{x^2 + y^2} = \|(x, y) - (0, 0)\|
  \end{equation*}
  and
  \begin{equation*}
    \|(y, 0) - (0, 0)\| = y \le \sqrt{x^2 + y^2} = \|(x, y) - (0, 0)\|.
  \end{equation*}
  We can see that at $(\xbar, \ybar) = (0, 0)$ the mapping is single-valued. It is not
  single-valued at any other point and as such it is only pointwise Lipschitz
  continuous at $(\xbar, \ybar) = (0, 0)$.
\end{example}

The fundamental result of the Banach Contraction Mapping Principle has been
successfully extended to set-valued mappings in~\cite{Nad69Mult}, while the
proof in the setting of quasi-metric spaces has been developed
in~\cite{SecMatWar19Newf}. We provide here a version of this proof adapted to
work for set-valued quasi-contractions on quasi-metric spaces.
\begin{theorem}[Banach Fixed Point]\label{t:BFP}
  Let $\bbM$ be a quasi-metric space and $T: V \subseteq \bbM \setto \bbM$ be a quasi-contraction
  at $\xbar \in \bbM$ with $T(\xbar) = \xbar$, then there exists a neighborhood $O$ of $\xbar$ with
  $U := O\cap V$ such that $T:U \setto U$, and any sequence
  ${\{x^k\}}_{k \in \bbN}$ with $x^0 \in U$ and $x^{k+1} \in T(x^k)$ converges at least
  linearly to the unique fixed point $\xbar$.
\end{theorem}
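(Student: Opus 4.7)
The plan is to reduce everything to the pointwise quasi-contraction inequality, using only a radius argument to obtain the self-mapping property and using Lemma~\ref{lema:quasi-metric intersecrtion of infinite balls} to upgrade a shrinking-ball estimate into uniqueness. Since the preceding lemma already ensures $T(\bar{x})=\{\bar{x}\}$ (quasi-contractions are single-valued at the reference point), I will not need to worry about multi-valuedness at the anchor; everywhere else, the contraction will be applied in the asymmetric form $\dist(y,\bar{x})\le c\dist(x,\bar{x})$ for arbitrary $y\in T(x)$.

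First, I would choose $O=B_{r}(\bar{x})$ for $r>0$ small enough that $B_{r}(\bar{x})\subseteq V$, which is possible since $\bar{x}\in V$ and the statement only asks for a neighborhood; set $U=O\cap V$. Self-mapping $T:U\rightrightarrows U$ is then immediate: for any $x\in U$ and any $y\in T(x)$, quasi-contraction at $\bar{x}$ with $\bar{x}\in T(\bar{x})$ gives $\dist(y,\bar{x})\le c\dist(x,\bar{x})<cr<r$, so $y\in B_{r}(\bar{x})\subseteq U$.

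For convergence, I would pick any sequence with $x^{0}\in U$ and $x^{k+1}\in T(x^{k})$ and apply the quasi-contraction inequality at each step, yielding
\begin{equation*}
  \dist(x^{k+1},\bar{x})\le c\,\dist(x^{k},\bar{x})\le\cdots\le c^{k+1}\dist(x^{0},\bar{x}).
\end{equation*}
The right-hand side tends to $0$, so $x^{k}\to\bar{x}$ in the sense of the ambient topology, and the per-step estimate $\dist(x^{k+1},\bar{x})\le c\dist(x^{k},\bar{x})$ is precisely the definition of linear convergence with rate $c<1$.

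For uniqueness I would argue as follows: if $\tilde{x}\in U$ is any fixed point, then the constant sequence $x^{k}\equiv\tilde{x}$ is a valid orbit and the same iteration bound yields $\dist(\tilde{x},\bar{x})\le c^{k}\dist(\tilde{x},\bar{x})$ for every $k$, hence $\tilde{x}\in B_{c^{k}\dist(\tilde{x},\bar{x})}[\bar{x}]$ for all $k$; since $c^{k}\dist(\tilde{x},\bar{x})\to 0$, Lemma~\ref{lema:quasi-metric intersecrtion of infinite balls} forces $\tilde{x}=\bar{x}$. The main obstacle, as so often in the quasi-metric setting, is keeping track of the direction in which distances are measured. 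Here the obstacle is light: the quasi-contraction is stated exactly in the forward direction $\dist(y,\bar{y})$, which agrees both with the definition of convergence and with the family of balls appearing in Lemma~\ref{lema:quasi-metric intersecrtion of infinite balls}, so no symmetry is ever needed and no triangle-inequality detour in the wrong direction is forced.
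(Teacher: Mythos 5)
Your proof is correct and follows essentially the same route as the paper: a small ball around $\xbar$ is shown to be self-mapping via the pointwise quasi-contraction inequality, iterating gives the geometric decay $\dist(x^{k},\xbar)\le c^{k}\dist(x^{0},\xbar)$, and linear convergence and uniqueness follow. The only cosmetic difference is where Lemma~\ref{lema:quasi-metric intersecrtion of infinite balls} is invoked — you use it for uniqueness, the paper uses it to identify the limit of the iterates — and in both cases it is a slight detour, since $\dist(\tilde{x},\xbar)\le c\dist(\tilde{x},\xbar)$ with $c<1$ already forces $\dist(\tilde{x},\xbar)=0$ and hence $\tilde{x}=\xbar$ by QMS1.
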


\begin{proof}
  First, we will show that $\xbar$ is the unique fixed point. Consider $\ybar \in \bbM$ a
  fixed point, i.e. $T(\ybar) = \{\ybar\}$, and from quasi-contractivity
  \begin{equation*}
    \dist(\ybar, \xbar) \le c \dist(\ybar, \xbar).
  \end{equation*}
  Since $c \in (0, 1)$, we can conclude that $\dist(\ybar, \xbar) = 0$ and thus $\xbar = \ybar$.

  The mapping $F$ sends balls around $\xbar$ into themselves because for $x \in \bbM$ with
  $\dist(x, \xbar) < r$, quasi-contractivity implies $\sup_{y \in F(x)}\dist(y, \xbar) < cr$.
  Let ${\{x^k\}}_{k \in \mathbb{N}}$ be a sequence generated by $x^{k+1} \in F(x^k)$ with
  $x^0 \in U$ and let $r = \dist(x^0, \xbar)$. We can conclude that
  \begin{equation*}
    x_k \in B_{{c^k}r}(\xbar),
  \end{equation*}
  so $\dist(x^k, \xbar) \le c^k r$, and the sequence ${\{x^k\}}_{k \in \bbN}$ is convergent, and
  \begin{equation*}
    \lim_{k \to \infty} x^k \in \bigcap_{k \in \bbN} B_{{c^k}r}(\xbar) = \{\xbar\},
  \end{equation*}
  where we used Lemma~\ref{lema:quasi-metric intersecrtion of infinite balls}
  to compute the intersection of the balls.
\end{proof}

\begin{remark}
  When applied to a quasi-contraction, as opposed to a contraction, the
  Banach fixed point Theorem cannot guarantee the existence of a fixed point
  and because of this, Theorem~\ref{t:BFP} requires the existence of a fixed
  point as an assumption. This allows a weakening of the topological
  assumptions, namely that we do not require completeness of the space.
\end{remark}

\section{Algebraic Constructs}
The invertible linear mappings that are normally used in the development of
Newton's method turn out to be more rigid than actually required for
obtaining superlinear convergence rates. Two insights provide the suitable
definition for pseudo-linear maps on quasi-metric spaces.  The first insight is
that the only linearity property that is actually employed is the fact that
linear transformations map 0 to 0.  The second insight is that we can identify
two points with their difference vector, and as such we can consider linear
mappings equivalently as acting on pairs, or differences, of points.  These
observations lead to the following definition of linear mappings on quasi-metric
spaces. For the remaining of this chapter, $(\bbM, \dist)$ will be a quasi-metric
space.
\begin{definition}[Pseudo-Linear Maps]
  A mapping $H:\bbM \times \bbM \to \bbRn$ is called \emph{pseudo-linear} if $\forall x \in \bbM$,
  $H(x, x) = 0$. We denote the space of such mappings with $S_n(\bbM)$.
\end{definition}

Likewise, the full range of properties that are associated with inverses of
linear mappings in linear spaces is not required.  For Newton-type methods, all
we require is the following notion.
\begin{definition}[Inversely Compatible Maps]\label{def:inversley compatible}
  A pseudo-linear mapping $H:\bbM \times \bbM \to \bbRn$ is called \emph{inversely compatible}
  if there exists $H^{-}:\bbM \times \bbRn \to \bbM$ and $m \in \bbR$ such that
  \begin{equation*}
    \forall x, \in \bbM,\quad H^{-}(x, 0) = x,
  \end{equation*}
  and
  \begin{equation}\label{eq:def-geodesical-subadditive}
    \forall x, y \in \bbM, \forall v, w \in \bbRn,\quad \dist(H^{-}(x, v), H^{-}(y, w)) \le m\|v - w - H(x, y)\|.
  \end{equation}
  The mapping $H^{-}$ is called a {\em quasi inverse\/} of $H$. We denote
  \begin{equation}\label{eq:def-geodesical-operator norm}
    \||H^{-}\||:= \inf\{m \in \bbR~|~\mbox{\eqref{eq:def-geodesical-subadditive} holds} \}.
  \end{equation}
  The set of all such mappings is denoted by $GS_n(\bbM)$
\end{definition}

These notions simplify back to known objects in the case of Euclidean spaces,
by interpreting pseudo-linear mappings as linear maps acting on the difference
vector of two points. Similarly, on Riemannian Manifolds, pseudo-linear maps
act on the vector produced by the Riemannian logarithm of two points. The
next example clarifies both how these notions map back to Euclidean spaces and
where the inspiration for them comes from.
\begin{example}\label{ex:metric-space-euclidean}
  Let $\bbM = \bbRn$ with the Euclidean metric and let $T \in \bbRnxn$. Then the mapping
  $H:\bbRn \times \bbRn \to \bbRn$ defined by
  \begin{equation*}
    H(x, y) = T(y - x)
  \end{equation*}
  is pseudo-linear, because clearly
  \begin{equation*}
    H(x, x) = T(x - x) = 0.
  \end{equation*}
  Furthermore, if $T$ is invertible, then $H$ is inversely compatible, with
  \begin{equation*}
    H^{-}(x, v) = x + T^{-1}v.
  \end{equation*}
  This can be seen because
  \begin{align}\label{eq:example-linear-is-skew-symetric}
    \forall x \in \bbRn, y \in \bbRn, v \in \bbRn, w \in \bbRn \dist(H^{-}(x, v), H^{-}(y, w))
    &= \|H^{-}(y, w) - H^{-}(x, v)\| \nonumber \\
    &= \|y + T^{-1}w - x - T^{-1}v\| \nonumber \\
    &\le \|T^{-1}\|\|T(y - x) + w - v\| \nonumber \\
    &= \|T^{-1}\|\|v - w -T(y - x)\|.
  \end{align}
\end{example}
\begin{remark}
  The computation in~\eqref{eq:example-linear-is-skew-symetric} justifies the
  notation $\||H^{-}\||$ from~\eqref{eq:def-geodesical-operator norm} because in
  the Euclidean metric, considering the pseudo-linear mapping induced by
  a linear mapping $T$, $\||H^{-}\|| = \|T^{-1}\|$ holds.
\end{remark}

It is clear that the space $S_n(\bbM)$ inherits the algebraic structure of $\bbRn$,
so for any $H_1, H_2 \in S_n(\bbM)$, there exists $H_1 + H_2 \in S_n(\bbM)$ and
$\langle H_1, H_2 \rangle \in S_1(\bbM)$ defined by $(H_1 + H_2)(x, y) = H_1(x, y) + H_2(x, y)$
and $\langle H_1, H_2 \rangle(x, y) = \langle H_1(x, y), H_2(x, y) \rangle$ respectively. Similarly,
for $H_3, H_4 \in S_1(\bbM)$, there exist $H_3\cdot H_4 \in S_1(\bbM)$ and $H_3 \cdot H_1 \in S_n(\bbM)$
defined by $(H_3\cdot H_4)(x, y) = H_3(x, y)H_4(x, y)$ and
$(H_3\cdot H_1)(x, y) = H_3(x, y)H_1(x, y)$ respectively. Finally, the mapping
$H_1 \oplus H_2 \in S_{2n}(\bbM)$ is defined by $(H_1\oplus H_2)(x, y) = H_1(x, y) \oplus H_2(x, y)$.

\section{Newton Differentiability}
The work presented in~\cite{Qi_Sun93Anon} has introduced Newton's method for
semismooth functions. The defining property that yields superlinear
convergence can be adapted to functions defined on quasi-metric spaces, using
the previously developed algebraic notions.
\begin{definition}[Pointwise Newton Differentiability in Quasi-Metric Spaces]%
\label{d:Newton Diff metric}
  Let $\bbM$ be a quasi-metric space. A function $F:\bbM \to \bbRn$ is called
  \emph{weakly pointwise Newton differentiable at $\xbar$} if there exists a set
  valued mapping $\mathcal{H}F:\bbM \setto S_n(\bbM)$ such that
  \begin{equation}\label{eq:def-newton-diff-weak-metric}
    \lim_{x \to \xbar}\sup_{H \in \mathcal{H}F(x)}\frac{\|F(x) - F(\xbar) - H(x, \xbar)\|}{\dist(x, \xbar)} < \infty.
  \end{equation}
  Furthermore, if
  \begin{equation}\label{eq:def-newton-diff-metric}
    \lim_{x \to \xbar}\sup_{H \in \mathcal{H}F(x)}\frac{\|F(x) - F(\xbar) - H(x, \xbar)\|}{\dist(x, \xbar)} = 0,
  \end{equation}
  the function $F$ is called \emph{pointwise Newton differentiable at $\xbar$}.
\end{definition}

When studying Newton differentiability at all the points in a set, we can look
at a stronger smoothness condition, namely that of uniform Newton
differentiability.
\begin{definition}[Uniform Newton differentiability in Quasi-Metric Spaces]%
\label{d:uniform Newton Diff metric}
  Let $\bbM$ be a quasi-metric space. A function $F:\bbM \to \bbRn$ is called
  \emph{weakly uniformly Newton differentiable on $V \subseteq \bbM$} if there exists a set
  valued mapping $\mathcal{H}F:\bbM \setto S_n(\bbM)$ such that for every $\varepsilon > 0$ there
  exists a $\delta > 0$ such that for all $x \in \bbM$ and all $y \in V$ with $\dist(x, y) \le \delta$,
  \begin{equation}\label{eq:uniform def-newton-diff-weak-metric}
    \sup_{H \in \mathcal{H}F(x)}\frac{\|F(x) - F(y) - H(x, y)\|}{\dist(x, y)}
    \in (c + \varepsilon, c - \varepsilon).
  \end{equation}

  Furthermore, when for every $\varepsilon > 0$ there
  exists a $\delta > 0$ such that for all $x \in \bbM$ and all $y \in V$ with $\dist(x, y) \le \delta$,
  \begin{equation}\label{eq:uniform def-newton-diff--metric}
    \sup_{H \in \mathcal{H}F(x)}\frac{\|F(x) - F(y) - H(x, y)\|}{\dist(x, y)} < \varepsilon,
  \end{equation}
  the function $F$ is called \emph{uniformly Newton differentiable at $\xbar$}.
\end{definition}
\begin{remark}
  The equations~\eqref{eq:uniform def-newton-diff-weak-metric}
  and~\eqref{eq:uniform def-newton-diff--metric} imply the
  convergence of the limits in equations~\eqref{eq:def-newton-diff-weak-metric}
  and~\eqref{eq:def-newton-diff-metric} respectively.
  Even more, the convergence is uniform in $y$.
\end{remark}
\begin{remark}[Subsets of Newton Differential]\label{remark:subsets}
  It is useful to remark that subsets of a Newton differential are still
  Newton differentials. This fact follows clearly by remarking that
  if the supremum is taken over a smaller set, its value cannot increase.
  As such, the Newton differential of a function is not a unique object.
\end{remark}

In order to better understand these notions, we can consider the case of
$\bbRn$ equipped  with the standard Euclidean metric. In this setting, all
sufficiently smooth functions are Newton differentiable, with Newton
differentials constructed from the traditional Fréchet differentials.
\begin{proposition}[Fréchet Differentiability and Newton Differentiability]
  \label{prop:example-frechet}
  Let $F: \bbRm \to \bbRn$ of class $\mathcal{C}^1$. Then $F$ is pointwise Newton differentiable
  at any $\xbar \in \bbRm$ with Newton differential $\mathcal{H}F(x) = \{(y, z) -> \nabla F(x)^T(z - y)\}$.
\end{proposition}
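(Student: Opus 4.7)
The plan is to reduce the Newton differentiability condition to a single classical quotient and then close the argument with the fundamental theorem of calculus combined with the continuity of $\nabla F$.

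First I would observe that by construction $\mathcal{H}F(x)$ is a singleton containing only the pseudo-linear map $H_x(y,z) = \nabla F(x)^T(z - y)$, so the supremum in~\eqref{eq:def-newton-diff-metric} collapses, for $x \ne \bar{x}$, to the single quotient
\begin{equation*}
  \frac{\|F(x) - F(\bar{x}) - \nabla F(x)^T(\bar{x} - x)\|}{\|x - \bar{x}\|}.
\end{equation*}
All that remains is therefore to show that this quantity tends to zero as $x \to \bar{x}$, which is a purely classical statement about $\mathcal{C}^1$ functions on Euclidean space.

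For that, I would parametrize the segment from $\bar{x}$ to $x$ and invoke the fundamental theorem of calculus to write
\begin{equation*}
  F(x) - F(\bar{x}) = \int_0^1 \nabla F(\bar{x} + t(x - \bar{x}))^T (x - \bar{x})\, dt.
\end{equation*}
After pulling $\nabla F(x)^T(x - \bar{x})$ inside the integral (being careful with the sign convention built into $H_x$), the numerator of the quotient becomes
\begin{equation*}
  \left\|\int_0^1 \bigl[\nabla F(\bar{x} + t(x - \bar{x})) - \nabla F(x)\bigr]^T (x - \bar{x})\, dt\right\|,
\end{equation*}
which by the triangle inequality and the induced operator-norm bound on the integrand is at most $\|x - \bar{x}\| \cdot \sup_{t \in [0,1]} \|\nabla F(\bar{x} + t(x - \bar{x})) - \nabla F(x)\|$.

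Dividing by $\|x - \bar{x}\|$ and letting $x \to \bar{x}$, the supremum on the right vanishes by the uniform continuity of $\nabla F$ on any compact neighborhood of $\bar{x}$, which is guaranteed by the $\mathcal{C}^1$ hypothesis; this yields~\eqref{eq:def-newton-diff-metric}. The argument is essentially routine, and the only point that really requires care is the sign bookkeeping in the pseudo-linear map so that the constant factor $\nabla F(x)^T$ pulls cleanly under the integral sign. I do not anticipate any obstacle beyond that.
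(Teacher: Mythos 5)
Your argument is correct and reaches the same essential point as the paper's: the Newton differential $\mathcal{H}F(x)$ is a singleton, the supremum collapses, and everything reduces to continuity of $\nabla F$ near $\xbar$. The packaging differs slightly. The paper invokes Taylor's theorem with Peano remainder at $\xbar$, writes $\nabla F(\xbar)^T = (\nabla F(\xbar)^T - \nabla F(x)^T) + \nabla F(x)^T$, divides, and uses continuity of $\nabla F$ at $\xbar$; you instead represent the increment via the fundamental theorem of calculus along the segment and bound the numerator by $\sup_{t\in[0,1]}\|\nabla F(\xbar + t(x-\xbar)) - \nabla F(x)\|\cdot\|x-\xbar\|$. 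These are two equivalent forms of the same mean-value estimate and both rest exactly on the $\mathcal{C}^1$ hypothesis, so the distinction is cosmetic rather than structural. One place to be more careful, precisely where you flag the ``sign convention built into $H_x$'': if the differential is taken literally as $(y,z)\mapsto\nabla F(x)^T(z-y)$, then $H(x,\xbar)=\nabla F(x)^T(\xbar-x)$, and pulling this into the integral yields
\begin{equation*}
  \left\|\int_0^1 \bigl[\nabla F(\xbar + t(x-\xbar)) + \nabla F(x)\bigr]^T(x-\xbar)\,dt\right\|
\end{equation*}
with a \emph{plus} sign, which does not vanish after dividing by $\|x-\xbar\|$. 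The cancellation your display exhibits, with integrand $[\nabla F(\cdot) - \nabla F(x)]$, corresponds to $H(x,\xbar)=\nabla F(x)^T(x-\xbar)$, i.e.\ the differential $(y,z)\mapsto\nabla F(x)^T(y-z)$. The paper's own proof makes the same silent substitution, so this is a typographical inconsistency in the stated $\mathcal{H}F$ rather than a flaw in your argument; but ``being careful with the sign'' here actually means recognizing that the sign in the proposition's displayed differential must be reversed for Definition~\ref{d:Newton Diff metric} to be satisfied.
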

\begin{proof}
  Using Taylor's expansion for $F$ around $\xbar$, we see that there exists
  $h:\bbRm \to \bbRn$ such that $\lim_{x \to \xbar}h(x) = 0$ and
  \begin{equation*}
    F(x) = F(\xbar) + (\nabla F(\xbar)^T - \nabla F(x)^T + \nabla F(x)^T)(x - \xbar) + h(x)\|x - \xbar\|.
  \end{equation*}
  Clearly rearranging, dividing by $\|x - \xbar\|$ and taking the limit as $x \to \xbar$
  while using the continuity of $\nabla F$ at $\xbar$ shows the desired conclusion.
\end{proof}

\subsection{Calculus of Newton Differentiability}
As Newton differentiability is defined in a manner analogues to Fréchet
differentiability, similar techniques can be used in order to construct
calculus rules for Newton differentiable functions.
\begin{proposition}\label{prop:calculus-newton-diff-sum}
  Let $F:U \subseteq \bbM \to \bbRn$ and $G:U \to \bbRn$ be pointwise Newton differentiable at $\xbar \in U$
  with Newton differentials $\mathcal{H}F$ and $\mathcal{H}G$. Then $F + G$ is Newton
  differentiable at $\xbar$ with Newton differential
  $x \mapsto \{H_F + H_G ~|~ H_F \in \mathcal{H}F(x), H_G \in \mathcal{H}G(x)\}$.
\end{proposition}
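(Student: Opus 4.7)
The plan is to unpack the definition of pointwise Newton differentiability for $F+G$ using the proposed Newton differential and reduce everything to a triangle inequality on the error term, then to a separation of the supremum over the product $\mathcal{H}F(x) \times \mathcal{H}G(x)$ into two independent suprema.

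First I would fix $x \in U$ with $x \ne \xbar$ and take an arbitrary element of the claimed differential at $x$, i.e.\ a pseudo-linear map of the form $H = H_F + H_G$ with $H_F \in \mathcal{H}F(x)$ and $H_G \in \mathcal{H}G(x)$. The key observation is that the error of $F+G$ splits cleanly: by the triangle inequality in $\bbRn$,
\begin{equation*}
\|(F+G)(x) - (F+G)(\xbar) - H(x,\xbar)\|
\le \|F(x)-F(\xbar)-H_F(x,\xbar)\| + \|G(x)-G(\xbar)-H_G(x,\xbar)\|.
\end{equation*}
Dividing by $\dist(x,\xbar)>0$ and taking the supremum over $H_F \in \mathcal{H}F(x)$ and $H_G \in \mathcal{H}G(x)$ independently, the standard fact that the supremum of a sum over a product set is bounded by the sum of the suprema yields
\begin{equation*}
\sup_{H \in (\mathcal{H}F+\mathcal{H}G)(x)} \frac{\|(F+G)(x)-(F+G)(\xbar)-H(x,\xbar)\|}{\dist(x,\xbar)}
\le A_F(x) + A_G(x),
\end{equation*}
where $A_F(x)$ and $A_G(x)$ denote the Newton-difference ratios for $F$ and $G$ appearing in \eqref{eq:def-newton-diff-metric}.

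Finally, since $F$ and $G$ are pointwise Newton differentiable at $\xbar$, both $A_F(x)$ and $A_G(x)$ tend to $0$ as $x \to \xbar$, and hence so does their sum. Taking the limit as $x \to \xbar$ gives that the ratio on the left is also $0$, which is exactly \eqref{eq:def-newton-diff-metric} for $F+G$ with the stated Newton differential.

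There is no real obstacle here; the only subtle point is noting that one is taking a supremum over a product set $\mathcal{H}F(x) \times \mathcal{H}G(x)$, so the bound by the sum of the two suprema is immediate and tight (as opposed to the case of a sup over a constrained diagonal subset). The argument does not rely on any additional structure of $\bbM$ beyond what Definition~\ref{d:Newton Diff metric} provides, and it also shows, by the same computation, that the weak version of the statement holds under weak pointwise Newton differentiability, replacing the limit equal to zero with a finite $\limsup$.
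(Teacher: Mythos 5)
Your proof is correct and follows essentially the same route as the paper's: split the error via the triangle inequality in $\bbRn$, bound the supremum over $\mathcal{H}F(x)+\mathcal{H}G(x)$ by the sum of the individual suprema, and pass to the limit using the Newton differentiability of $F$ and $G$. Your closing remarks about the supremum over a product set and the weak version are accurate but do not change the argument, which matches the paper's.
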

\begin{proof}
  From the assumptions, we know that
  \begin{equation}\label{eq:calculus-of-newton--diff-eq-1}
    \lim_{x \to \xbar}\sup_{H_F \in \mathcal{H}F(x)}\frac{\|F(x) - F(\xbar) - H_F(x, \xbar)\|}{\dist(x, \xbar)} = 0
  \end{equation}
  and
  \begin{equation}\label{eq:calculus-of-newton--diff-eq-2}
    \lim_{x \to \xbar}\sup_{H_G \in \mathcal{H}G(x)}\frac{\|G(x) - G(\xbar) - H_G(x,  \xbar)\|}{\dist(x, \xbar)} = 0.
  \end{equation}

  The triangle inequality together with the sub-additivity of the supremum
  operator provides the next step in the proof, yielding
  \begin{align*}
   &\sup_{H_F + H_G \in \mathcal{H}F(x) + \mathcal{H}G(x)}
    \frac{\|(F+G)(x) - (F+G)(\xbar) - (H_F + H_G)(x, \xbar)\|}{\dist(x, \xbar)} \nonumber \\
   &\le \sup_{H_F + H_G \in \mathcal{H}F(x) + \mathcal{H}G(x)}
     \frac{\|F(x) - F(\xbar) - H_F(x, \xbar)\|}{\dist(x, \xbar)}
     + \frac{\|G(x) - G(\xbar) - H_G(x, \xbar)\|}{\dist(x, \xbar)} \nonumber \\
   &\le \sup_{H_F \in \mathcal{H}F(x)} \frac{\|F(x) - F(\xbar) - H_F(x, \xbar)\|}{\dist(x, \xbar)}
     + \sup_{H_G \in \mathcal{H}G(x)} \frac{\|G(x) - G(\xbar) - H_G(x, \xbar)\|}{\dist(x, \xbar)},
  \end{align*}
  and taking the limit as $x \to \xbar$ using~\eqref{eq:calculus-of-newton--diff-eq-1}
  and~\eqref{eq:calculus-of-newton--diff-eq-2} finishes the proof.
\end{proof}

For the chain rule, we consider $\bbRn$ equipped with the standard Euclidean
distance.
\begin{proposition}\label{prop:calculus-newton-diff-compositon}
  Let $F:U \subseteq \bbM \to \bbRm$ and $G:F(U) \to \bbRn$ be pointwise Newton differentiable at $\xbar$
  and $F(\xbar)$ respectively with Newton differentials $\mathcal{H}F$ and $\mathcal{H}G$. Assume
  further that $F$ is continuous at $\xbar$ and that there exists $K > 0$ such that
  \begin{equation}\label{eq:chain rule bound on H_F}
    \sup_{x \in U}\sup_{H \in \mathcal{H}F(x)}\sup_{y, z \in U}\|H(x)(y, z)\| \le K\dist(y, z).
  \end{equation}
  Then $G \circ F$ is Newton differentiable at $\xbar$ with Newton differential
  \begin{equation*}
    \mathcal{H}(G \circ F)(x) = \{(y, z) \mapsto H_G(F(y), F(z))~|~H_G \in \mathcal{H}G(F(x))\}.
  \end{equation*}
\end{proposition}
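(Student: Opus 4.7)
The plan is to directly verify the limit condition~\eqref{eq:def-newton-diff-metric} for $G\circ F$ at $\bar x$ with the candidate differential displayed. The main identity to exploit is the multiplicative splitting
\begin{equation*}
  \frac{\|G(F(x))-G(F(\bar x))-H_G(F(x),F(\bar x))\|}{\dist(x,\bar x)}
  = \frac{\|G(F(x))-G(F(\bar x))-H_G(F(x),F(\bar x))\|}{\|F(x)-F(\bar x)\|}\cdot\frac{\|F(x)-F(\bar x)\|}{\dist(x,\bar x)},
\end{equation*}
valid whenever $F(x)\ne F(\bar x)$. The degenerate case $F(x)=F(\bar x)$ is harmless: pseudo-linearity of $H_G\in \mathcal{H}G(F(x))$ gives $H_G(F(x),F(\bar x))=H_G(F(\bar x),F(\bar x))=0$, so the numerator on the left vanishes and contributes $0$ to the supremum.

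First I would handle the second factor. Given $\varepsilon>0$, Newton differentiability of $F$ at $\bar x$ produces $\delta_1>0$ such that for $x\in U$ with $\dist(x,\bar x)<\delta_1$ and any $H_F\in \mathcal{H}F(x)$,
\begin{equation*}
  \|F(x)-F(\bar x)-H_F(x,\bar x)\|\le \varepsilon\,\dist(x,\bar x).
\end{equation*}
Combined with the uniform bound $\|H_F(x,\bar x)\|\le K\dist(x,\bar x)$ coming from~\eqref{eq:chain rule bound on H_F} (applied with $y=x$, $z=\bar x$), the triangle inequality yields
\begin{equation*}
  \|F(x)-F(\bar x)\|\le (K+\varepsilon)\,\dist(x,\bar x).
\end{equation*}
So the second factor stays bounded by a constant (say $K+1$) on a neighborhood of $\bar x$.

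Next I would handle the first factor. Since $F$ is continuous at $\bar x$, we have $F(x)\to F(\bar x)$ as $x\to\bar x$. Newton differentiability of $G$ at $F(\bar x)$ therefore gives, for any $\eta>0$, a $\delta_2>0$ such that whenever $\|F(x)-F(\bar x)\|<\delta_2$,
\begin{equation*}
  \sup_{H_G\in \mathcal{H}G(F(x))}\frac{\|G(F(x))-G(F(\bar x))-H_G(F(x),F(\bar x))\|}{\|F(x)-F(\bar x)\|}<\eta.
\end{equation*}
By continuity, this range of $F(x)$ is reached for all $x$ in some neighborhood of $\bar x$.

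Finally I combine the two factors: choosing $\eta=\varepsilon/(K+1)$ and restricting $x$ to the intersection of the neighborhoods above gives
\begin{equation*}
  \sup_{H_G\in \mathcal{H}G(F(x))}\frac{\|G(F(x))-G(F(\bar x))-H_G(F(x),F(\bar x))\|}{\dist(x,\bar x)}<\varepsilon,
\end{equation*}
which is exactly~\eqref{eq:def-newton-diff-metric} for $G\circ F$ with the claimed differential. The only subtle point, and what I expect to be the main obstacle to state cleanly rather than technically hard, is the interplay between the supremum over $\mathcal{H}G(F(x))$ (which depends on the point where $G$'s differential is evaluated) and the limit $x\to\bar x$: continuity of $F$ is exactly what lets us translate ``$F(x)$ close to $F(\bar x)$'' into the neighborhood of $F(\bar x)$ used by Definition~\ref{d:Newton Diff metric} for $G$, and the supremum commutes with the bound because the estimate from Newton differentiability of $G$ holds uniformly over $H_G\in \mathcal{H}G(F(x))$ for each fixed such $x$.
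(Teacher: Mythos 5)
Your argument is correct and follows essentially the same route as the paper: the multiplicative splitting of the difference quotient, the Lipschitz-type bound $\|F(x)-F(\bar x)\|\le (K+1)\dist(x,\bar x)$ derived from the Newton differentiability of $F$ plus the hypothesis~\eqref{eq:chain rule bound on H_F}, and continuity of $F$ feeding into the Newton differentiability of $G$ at $F(\bar x)$. The one place where you go slightly beyond the paper is in spelling out the degenerate case $F(x)=F(\bar x)$ (where the splitting is undefined), handled cleanly via pseudo-linearity of $H_G$; the paper leaves this tacit.
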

\begin{proof}
  We first need to establish a bound on
  \begin{equation*}
    \lim_{x \to \xbar}\frac{\|F(x) - F(\xbar)\|}{\dist(x, \xbar)}.
  \end{equation*}
  From the Newton differentiability of $F$ we can conclude that there exists a
  neighborhood $V$ of $\xbar$ such that for all $x \in V$, $x \ne \xbar$
  \begin{equation}\label{eq:in proof of chain rule 1}
        \sup_{H \in \mathcal{H}F(x)}\frac{\|F(x) - F(\xbar) - H(x, \xbar)\|}{\dist(x, \xbar)} \le 1.
  \end{equation}
  With~\eqref{eq:in proof of chain rule 1} and~\eqref{eq:chain rule bound on H_F},
  we can use the triangle inequality for the norm to bound
  \begin{align}\label{eq:chain rule proof f}
    \|F(x) - F(\xbar)\|
    &\le \sup_{H \in \mathcal{H}F(x)}\|F(x) - F(\xbar) - H(x, \xbar) + H(x, \xbar)\| \nonumber \\
    &\le \sup_{H \in \mathcal{H}F(x)}\|F(x) - F(\xbar) - H(x, \xbar)\| + \|H(x, \xbar)\| \nonumber \\
    &\le \dist(x, \xbar) + K\dist(x, \xbar).
  \end{align}

  We can now focus our attention on the key object for the proof at hand and
  compute, using~\eqref{eq:chain rule proof f},
  \begin{align}\label{eq:chain rule proof thing 2}
    \lim_{x \to \xbar}
    &\sup_{H \in \mathcal{H}(F \circ G)(x)} \frac{\|G \circ F(x) - G \circ F(\xbar) - H(x, \xbar)\|}{\dist(x, \xbar)}
      \nonumber \\
    &\le\lim_{x \to \xbar}\sup_{H \in \mathcal{H}(F \circ G)(x)}
      \frac{\|G \circ F(x) - G \circ F(\xbar) - H(x, \xbar)\|}{\|F(x) - F(\xbar)\|}
      \frac{\|F(x) - F(\xbar)\|}{\dist(x, \xbar)}\nonumber \\
    &\le\lim_{x \to \xbar}\sup_{H \in \mathcal{H}(F \circ G)(x)}
      (K+1)\frac{\|G \circ F(x) - G \circ F(\xbar) - H(x, \xbar)\|}{\|F(x) - F(\xbar)\|}.
  \end{align}
  The last step consists in using the continuity of $F$ at $\xbar$ and the
  Newton differentiability of $G$ at $F(\xbar)$ to calculate
  \begin{equation}\label{eq:chain rule proof thing 3}
    \lim_{x \to \xbar}\sup_{H \in \mathcal{H}G(F(x))}
      \frac{\|G \circ F(x) - G \circ F(\xbar) - H_G(F(x), F(\xbar))\|}{\|F(x) - F(\xbar)\|} = 0.
  \end{equation}
  Combining~\eqref{eq:chain rule proof thing 3}
  with~\eqref{eq:chain rule proof thing 2} proves the conclusion.
\end{proof}
\begin{remark}
  It is interesting to see how this chain rule behaves in the context of
  Example~\ref{ex:metric-space-euclidean}. For this, consider $F, G: \bbRn \to \bbRn$
  of class $\mathcal{C}^{\infty}$. From proposition~\ref{prop:example-frechet}, the Newton
  differential of $G$ at $F(x)$ is $(y, z) \mapsto \nabla G(F(x))(z - y)$. Assuming
  that $\|\nabla F(x)\|$ is bounded on a neighborhood of $\xbar$ we can compute the
  Newton differential of $\mathcal{H}(G \circ F)$ as the singleton
  $\mathcal{H}(G \circ F)(x) = \{(y, z) \mapsto \nabla G(F(x))^T(F(z) - F(y))\}$. Looking at the key
  object in~\eqref{eq:def-newton-diff-metric} and using the mean value
  theorem allows us to relate the chain rule for Fréchet differentiability
  with that of Newton differentiability by computing
  \begin{equation*}
    \|\mathcal{H}(G \circ F)(x)(x, \xbar)\| = \|\nabla G(F(x))^T(F(\xbar) - F(x))\| \le
    \|\nabla G(F(x))^T\|\nabla F(\xi)^T\|\|\xbar - x\|,
  \end{equation*}
  where $\xi$ is a point in the line segment between $x$ and $\xbar$.
\end{remark}

In order to complete the fundamental calculus rules, we need to describe
the behavior of the direct sum of two Newton differentiable functions.
\begin{proposition}
  Let $F:U \subseteq \bbM \to \bbRn$ and $G:U \to \bbRm$ be pointwise Newton differentiable at $\xbar$
  with Newton differentials $\mathcal{H}F$ and $\mathcal{H}G$. Then $F \oplus G:U \to \bbR^{n + m}$ is Newton
  differentiable at $\xbar$ with Newton differential
  $x \mapsto \{H_F \oplus H_G~|~H_F \in \mathcal{H}F(x), H_G \in \mathcal{H}G(x)\}$.
\end{proposition}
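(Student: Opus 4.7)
The plan is to mirror the structure of the sum rule (Proposition~\ref{prop:calculus-newton-diff-sum}) almost verbatim, using the fact that the Euclidean norm on a direct sum is controlled by the sum of the norms on the factors. Concretely, for any $u \in \bbRn$ and $v \in \bbRm$ we have $\|u \oplus v\| \le \|u\| + \|v\|$, and this is the only geometric input beyond what the sum rule already uses.

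First I would write out the hypothesis in the two limit forms
\begin{align*}
\lim_{x \to \xbar}\sup_{H_F \in \mathcal{H}F(x)}\frac{\|F(x) - F(\xbar) - H_F(x, \xbar)\|}{\dist(x, \xbar)} &= 0, \\
\lim_{x \to \xbar}\sup_{H_G \in \mathcal{H}G(x)}\frac{\|G(x) - G(\xbar) - H_G(x, \xbar)\|}{\dist(x, \xbar)} &= 0,
\end{align*}
and observe that the proposed Newton differential of $F \oplus G$ is tautologically a subset of $S_{n+m}(\bbM)$: if $H_F(x,x)=0$ and $H_G(x,x)=0$, then $(H_F \oplus H_G)(x,x) = 0 \oplus 0 = 0$, so the object is pseudo-linear in the sense of the paper.

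Next I would estimate the defining quotient. For any $H_F \in \mathcal{H}F(x)$ and $H_G \in \mathcal{H}G(x)$,
\begin{align*}
\frac{\|(F \oplus G)(x) - (F \oplus G)(\xbar) - (H_F \oplus H_G)(x, \xbar)\|}{\dist(x, \xbar)}
&\le \frac{\|F(x) - F(\xbar) - H_F(x, \xbar)\|}{\dist(x, \xbar)} \\
&\quad + \frac{\|G(x) - G(\xbar) - H_G(x, \xbar)\|}{\dist(x, \xbar)}.
\end{align*}
Taking the supremum over $H_F$ and $H_G$ on both sides and using subadditivity of $\sup$ (exactly as in the sum rule) splits the right-hand side into the two separate suprema appearing in the hypotheses.

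Finally I would take $x \to \xbar$ and use the two limits above to conclude that the supremum on the left tends to $0$, which is the definition of pointwise Newton differentiability of $F \oplus G$ at $\xbar$ with the claimed differential. There is no real obstacle here: the only thing to be slightly careful about is confirming $\|u \oplus v\| \le \|u\| + \|v\|$ for the norm used on $\bbR^{n+m}$ (trivially true for the Euclidean norm since $\sqrt{a^2+b^2} \le a+b$ for $a,b \ge 0$), and the fact that the newly constructed Newton differential is nonempty whenever both $\mathcal{H}F(x)$ and $\mathcal{H}G(x)$ are, so that the supremum is taken over a genuine set.
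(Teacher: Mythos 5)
Your proof is correct, and structurally it mirrors the paper's argument (both split the direct-sum residual into the $F$-part and the $G$-part and then invoke the two hypotheses), but the single algebraic ingredient differs. You use the subadditive bound $\|u \oplus v\| \le \|u\| + \|v\|$, which lets you literally recycle the sum-rule proof (Proposition~\ref{prop:calculus-newton-diff-sum}) with $\oplus$ in place of $+$, and the subadditivity of $\sup$ then gives an inequality that suffices because the right-hand side tends to $0$. The paper instead uses the exact Pythagorean identity $\|u \oplus v\|^2 = \|u\|^2 + \|v\|^2$ and works with the squared quotient, so its splitting is an equality rather than an inequality (the supremum of a sum of terms indexed by independent parameters $H_F, H_G$ equals the sum of the separate suprema). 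Both are one-line observations about the Euclidean norm on $\bbR^{n+m}$, and both deliver the same conclusion; yours has the mild advantage of making the parallel with the sum rule completely transparent, while the paper's has the mild advantage of avoiding any inequality in the decomposition step. Your aside checking that $(H_F \oplus H_G)(x,x) = 0$ (so the proposed differential lies in $S_{n+m}(\bbM)$) is a small but worthwhile sanity check that the paper leaves implicit.
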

\begin{proof}
  The proof simply follows from the fact that for any $x \in \bbRn$ and $y \in \bbRm$,
  \begin{equation*}
    \|x \oplus y\|^2 = \|x\|^2 + \|y\|^2.
  \end{equation*}
  Applying this to the defining property of Newton differentiability,
  \begin{align*}
    \lim_{x \to \xbar}
    &\sup_{(H_F \oplus H_G) \in \mathcal{H}(F \oplus G)(x)}
      \frac{\|(F \oplus G)(x) - (F \oplus G)(\xbar) - (H_F \oplus H_G)(x, \xbar)\|^2}{{\dist(x, \xbar)}^2}\\
    &= \lim_{x \to \xbar}\sup_{H_F \in \mathcal{H}F(x)}
      \frac{\|F(x) - F(\xbar) - H_F(x, \xbar)\|^2}{{\dist(x, \xbar)}^2} \\
    &+ \lim_{x \to \xbar}\sup_{H_G \in \mathcal{H}G(x)}
      \frac{\|G(x) - G(\xbar) - H_G(x, \xbar)\|^2}{{\dist(x, \xbar)}^2} \\
    &= 0,
  \end{align*}
  where we have used the Newton differentiability of $F$ and $G$ to compute the
  two limits.
\end{proof}

As the last step necessary in order to provide complete calculus rules,
we will prove the product rule for Newton differentiable functions.
\begin{proposition}
  Let $F:U \subseteq \bbM \to \bbR$ and $G:U \to \bbR$ be pointwise Newton differentiable at $\xbar$
  with Newton differentials $\mathcal{H}_F$ and $\mathcal{H}_G$. Then $F \cdot G$ is Newton
  differentiable at $\xbar$ with Newton differential
  \begin{equation*}
    \mathcal{H}(F \cdot G)(x) = \{(y, z) \mapsto H_F(y, z)G(x) + F(x)H_G(y, z)~|~H_G \in \mathcal{H}G(x), H_F \in \mathcal{H}F(x)\}.
  \end{equation*}
\end{proposition}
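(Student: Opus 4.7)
The plan is to expand the product $F(x)G(x)$ about the Newton linearisations of each factor and then cancel algebraically against the proposed differential $H_F(x,\xbar)G(x)+F(x)H_G(x,\xbar)$. Writing $\Delta_F(x) := F(x) - F(\xbar) - H_F(x,\xbar)$ and $\Delta_G(x) := G(x) - G(\xbar) - H_G(x,\xbar)$, the pointwise Newton differentiability of $F$ and $G$ gives
\[
\lim_{x \to \xbar} \sup_{H_F \in \mathcal{H}F(x)} \frac{|\Delta_F(x)|}{\dist(x,\xbar)} = 0 \quad \text{and} \quad \lim_{x \to \xbar} \sup_{H_G \in \mathcal{H}G(x)} \frac{|\Delta_G(x)|}{\dist(x,\xbar)} = 0.
\]
Substituting $F(x) = F(\xbar) + H_F(x,\xbar) + \Delta_F$ and $G(x) = G(\xbar) + H_G(x,\xbar) + \Delta_G$ into the product and collecting terms, I would verify the key algebraic identity
\[
F(x)G(x) - F(\xbar)G(\xbar) - H_F(x,\xbar)G(x) - F(x)H_G(x,\xbar) = F(\xbar)\Delta_G + G(\xbar)\Delta_F + \Delta_F\Delta_G - H_F(x,\xbar)H_G(x,\xbar).
\]

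Dividing by $\dist(x,\xbar)$ and taking the supremum over $H_F \in \mathcal{H}F(x)$ and $H_G \in \mathcal{H}G(x)$, the first two terms are handled exactly as in the proof of the sum rule (Proposition~\ref{prop:calculus-newton-diff-sum}): $F(\xbar)$ and $G(\xbar)$ are fixed scalars, while $|\Delta_F|/\dist(x,\xbar)$ and $|\Delta_G|/\dist(x,\xbar)$ vanish uniformly. The mixed remainder $\Delta_F\Delta_G/\dist(x,\xbar)$ is bounded by $|\Delta_F|\cdot(|\Delta_G|/\dist(x,\xbar))$, hence vanishes since $\Delta_F \to 0$ and the second factor tends to zero.

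The main obstacle is the cross term $H_F(x,\xbar)H_G(x,\xbar)/\dist(x,\xbar)$. My plan is to control it using the triangle inequality $|H_F(x,\xbar)| \le |F(x) - F(\xbar)| + |\Delta_F(x)|$ and its analogue for $H_G$, so that the product breaks into four pieces, each of which is a product of a factor that is $O(\dist(x,\xbar))$ and a factor that tends to zero. Making this rigorous requires a local growth condition on the Newton differentials, namely $|H_F(x,\xbar)| = O(\dist(x,\xbar))$ and likewise for $H_G$, in the spirit of the hypothesis~\eqref{eq:chain rule bound on H_F} imposed in Proposition~\ref{prop:calculus-newton-diff-compositon}. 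Under such a bound, the increments $|F(x) - F(\xbar)|$ and $|G(x) - G(\xbar)|$ are themselves $O(\dist(x,\xbar))$, continuity of $F$ and $G$ at $\xbar$ follows, and the cross term becomes $O(\dist(x,\xbar)) \cdot o(1)$ after division; combining this with the easy terms yields the claim.
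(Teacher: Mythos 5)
Your algebraic identity
\begin{equation*}
  F(x)G(x)-F(\xbar)G(\xbar)-H_F(x,\xbar)G(x)-F(x)H_G(x,\xbar)
  = F(\xbar)\Delta_G + G(\xbar)\Delta_F + \Delta_F\Delta_G - H_F(x,\xbar)H_G(x,\xbar)
\end{equation*}
is correct, and your handling of the first three pieces is fine. The cross term you flag is a real obstruction, and your proposal is in fact more careful than the paper's own proof: the paper rewrites the numerator as $F(x)\Delta_G$ plus $\Delta_F G(\xbar)$ and bounds each, but that decomposition leaves a residual $H_F(x,\xbar)\bigl(G(\xbar)-G(x)\bigr)$ that the paper never addresses. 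This residual (equivalently, your $H_F H_G$ cross term) cannot be controlled from pointwise Newton differentiability alone, which gives neither boundedness of $H_F(x,\xbar)$ nor continuity of $F$ at $\xbar$. A concrete witness: on $\bbR$ with $\xbar=0$, take $F=G=\operatorname{sign}$ with $\mathcal{H}F(x)=\{(y,z)\mapsto -|x|^{-1}(z-y)\}$ for $x\ne 0$; both remainders are identically zero, yet the claimed remainder of $F\cdot G$ equals $-1$ for all $x\ne 0$, so the ratio diverges and the proposition as stated fails.

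Your proposed repair, imposing a linear growth bound $|H(x,\xbar)|\le K\dist(x,\xbar)$ for $H\in\mathcal{H}F(x)\cup\mathcal{H}G(x)$ in the spirit of~\eqref{eq:chain rule bound on H_F}, is exactly what is needed: it makes the cross term $O(\dist(x,\xbar))$ after division, and as you observe it delivers the continuity of $F$ and $G$ at $\xbar$ that the argument requires in any case (the paper's own final bound carries a stray factor $F(x)$ whose local boundedness is otherwise unjustified). With that hypothesis, your outline closes the proof. In short, your four-term expansion differs from the paper's two-term split precisely in that yours makes the cross term visible rather than silently dropping it.
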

\begin{proof}
  Using a similar argument as in Proposition~\ref{prop:calculus-newton-diff-sum}, we can
  directly compute
  \begin{align*}
   \lim_{x \to \xbar}&\sup_{H \in \mathcal{H}(F \cdot G)(x)}
    \frac{\|(F \cdot G)(x) - (F \cdot G)(\xbar) - H(x, \xbar)\|}{\dist(x, \xbar)} \\
   &=\lim_{x \to \xbar}\sup_{H_F \in \mathcal{H}F(x), H_G \in \mathcal{H}G(x)}
     \frac{\|F(x)G(x) - F(\xbar)G(\xbar) - H_F(x, \xbar)G(x) - F(x)H_G(x, \xbar)\|}{\dist(x, \xbar)} \\
   &=\lim_{x \to \xbar}\sup_{H_F \in \mathcal{H}F(x), H_G \in \mathcal{H}G(x)}
     \frac{\|F(x)(G(x) - G(\xbar) - H_G(x, \xbar))-
     (F(x) - F(\xbar) - H_F(x, \xbar)G(x))G(\xbar)\|}{\dist(x, \xbar)} \\
   &\le\lim_{x \to \xbar}\sup_{H_G \in \mathcal{H}G(x)} \frac{\|F(x)(G(x) - G(\xbar) - H_G(x, \xbar))\|}{\dist(x, \xbar)}
     + \sup_{H_F \in \mathcal{H}F(x)} \frac{\|(F(x) - F(\xbar) - H_F(x, \xbar))G(\xbar)\|}{\dist(x, \xbar)}\\
    &= 0,
  \end{align*}
  where the last two limits are zero because of the Newton differentiability of
  $F$ and $G$.
\end{proof}

\section{Newton-type Methods}
The class of Newton differentiable functions provides a large pool of candidates
for Newton-type methods. Under the assumption that a Newton differential of
the function $F$ contains \emph{inversely compatible} pseudo-linear maps,
a Newton-type fixed point operator can be defined.
As explained in Example~\ref{ex:metric-space-euclidean}, the inverse of
a pseudo-linear map acts as a translation on points in $\bbM$. This motivates
the adaptation of the classic Newton's method in our setting.
\begin{definition}[Newton-type Method in Quasi-metric Spaces]%
  \label{def:newton quasi metric spaces}
  The fixed point iteration of the proper (nowhere empty) set-valued operator
  $\mathcal{N}_{\mathcal{H}F}:M \setto M$,
  \begin{equation*} 
    \mathcal{N}_{\mathcal{H}F} x = \{{H}^{-}(x, -F(x))~|~H \in \mathcal{H}F(x) \cap GS_n(\bbM)\},
  \end{equation*}
  \begin{equation}\label{eq:Metric-Newton-type-method}
    x^{k+1} \in \mathcal{N}_{\mathcal{H}F}{x}^k
  \end{equation}
  is called a {\em Newton-type method}.
\end{definition}

In order to analyze the convergence rate of this method, we first need to
establish convergence to a fixed point. For this, we employ the Banach Fixed
Point Theorem.
\begin{proposition}\label{prop:weak-metric-newton-is-quasicontraction}
  Let $F: U \subseteq \bbM \to \bbRn$ be pointwise weakly Newton differentiable at $\xbar$
  with $F(\xbar) = 0$.
  Denote the Newton differential of $F$ by $\mathcal{H}F$, and assume that $\forall x \in U$ all
  $H \in \mathcal{H}(x)$ are inversely compatible mappings, that is $\mathcal{H}(x) \subseteq GS_n(\bbM)$.
  Furthermore, assume that the set $\bigcup_{x \in U}\{\||{{H}^{-}}\||~|~H \in \mathcal{H}F(x)\}$
  is bounded by $\Omega > 0$ and let $c > 0$ be the limit
  in~\eqref{eq:def-newton-diff-weak-metric}, i.e.
  \begin{equation*}
    \lim_{x \to \xbar}\sup_{H \in \mathcal{H}F(x)}\frac{\|F(x) - F(\xbar) - H(x, \xbar)\|}{\dist(x, \xbar)} = c,
  \end{equation*}
  be such that $c \Omega < 1$. Then there exists a neighborhood $\xbar \in V \subseteq \bbM$ such
  that the mapping $\mathcal{N}_{\mathcal{H}F}$ is a quasi-contraction on $V$.
\end{proposition}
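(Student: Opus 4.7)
The plan is to verify the two requirements of quasi-contractivity at $\xbar$ directly: that $\xbar$ is a fixed point of $\mathcal{N}_{\mathcal{H}F}$ and that $\dist(y, \xbar) \le c'\dist(x, \xbar)$ for some $c' < 1$ and all $x$ in a neighborhood of $\xbar$ and all $y \in \mathcal{N}_{\mathcal{H}F}(x)$. The first point is immediate: for any $H \in \mathcal{H}F(\xbar)$, the hypothesis $F(\xbar) = 0$ and the defining property $H^{-}(\xbar, 0) = \xbar$ of a quasi-inverse give $H^{-}(\xbar, -F(\xbar)) = H^{-}(\xbar, 0) = \xbar$, so $\mathcal{N}_{\mathcal{H}F}(\xbar) = \{\xbar\}$. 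This also serves to single out $\bar{y} = \xbar$ in the quasi-contraction inequality.

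For the contraction estimate, fix $x$ close to $\xbar$ and pick an arbitrary $y = H^{-}(x, -F(x)) \in \mathcal{N}_{\mathcal{H}F}(x)$ with $H \in \mathcal{H}F(x) \cap GS_n(\bbM)$. Applying the inverse-compatibility inequality~\eqref{eq:def-geodesical-subadditive} to the pairs $(x, -F(x))$ and $(\xbar, 0)$, together with $\||H^{-}\|| \le \Omega$ and $F(\xbar) = 0$, I would derive
\begin{equation*}
  \dist(y, \xbar) = \dist\bigl(H^{-}(x, -F(x)), H^{-}(\xbar, 0)\bigr) \le \Omega\,\bigl\|F(x) - F(\xbar) - H(x, \xbar)\bigr\|,
\end{equation*}
where the sign inside the norm is absorbed by $\|\cdot\|$. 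Taking the supremum over all $H \in \mathcal{H}F(x) \cap GS_n(\bbM)$ and dividing by $\dist(x, \xbar)$ produces $\Omega$ times exactly the Newton-differentiability ratio.

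To close the argument, choose $\varepsilon > 0$ with $\Omega(c + \varepsilon) < 1$, which is possible because $c\Omega < 1$ by hypothesis. By the weak Newton differentiability limit~\eqref{eq:def-newton-diff-weak-metric}, there exists a neighborhood $V \subseteq U$ of $\xbar$ on which
\begin{equation*}
  \sup_{H \in \mathcal{H}F(x)} \frac{\|F(x) - F(\xbar) - H(x, \xbar)\|}{\dist(x, \xbar)} \le c + \varepsilon.
\end{equation*}
Combining the two estimates, for every $x \in V$ and every $y \in \mathcal{N}_{\mathcal{H}F}(x)$ one obtains $\dist(y, \xbar) \le \Omega(c + \varepsilon)\,\dist(x, \xbar)$, which is precisely the quasi-contraction condition with constant $c' = \Omega(c + \varepsilon) < 1$.

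The main obstacle, and the only non-routine step, is matching the expression produced by the inverse-compatibility bound (namely $\|-F(x) - 0 - H(x, \xbar)\|$) with the expression appearing in the Newton-differentiability hypothesis (namely $\|F(x) - F(\xbar) - H(x, \xbar)\|$); this must be handled by invoking $F(\xbar) = 0$ and the sign-insensitivity of the norm. Once this identification is made, the rest is a direct assembly of the uniform bound $\Omega$ on quasi-inverse norms with the $\limsup$ from the Newton differential, and the margin $\varepsilon$ provided by the strict inequality $c\Omega < 1$.
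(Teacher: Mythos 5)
Your proof follows the same route as the paper: show $\xbar$ is a fixed point, bound $\dist\bigl(H^{-}(x, -F(x)), H^{-}(\xbar, 0)\bigr)$ using~\eqref{eq:def-geodesical-subadditive}, invoke weak Newton differentiability, and choose $\varepsilon$ with $\Omega(c+\varepsilon) < 1$. The overall assembly is sound, and you correctly single out the one delicate step. But your resolution of that step is not valid. Applying~\eqref{eq:def-geodesical-subadditive} to the pairs $(x, -F(x))$ and $(\xbar, 0)$ gives
\begin{equation*}
\dist\bigl(H^{-}(x, -F(x)), H^{-}(\xbar, 0)\bigr) \le \||H^{-}\||\,\bigl\|(-F(x)) - 0 - H(x, \xbar)\bigr\| = \||H^{-}\||\,\bigl\|F(x) + H(x, \xbar)\bigr\|,
\end{equation*}
and after inserting $F(\xbar) = 0$ this is $\||H^{-}\||\,\|F(x) - F(\xbar) + H(x, \xbar)\|$, with a \emph{plus} in front of $H(x, \xbar)$. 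Newton differentiability controls $\|F(x) - F(\xbar) - H(x, \xbar)\|$. A norm absorbs an overall sign flip, but not a sign flip on a single summand, so $\|F(x) - F(\xbar) + H(x, \xbar)\|$ and $\|F(x) - F(\xbar) - H(x, \xbar)\|$ are not equal in general, and your identification does not hold. The phrase ``sign-insensitivity of the norm'' is doing work it cannot do.

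For context, the paper's own proof has the same defect (after citing~\eqref{eq:def-geodesical-subadditive} it simply writes $\|F(x) - F(\xbar) - H(x)(x, \xbar)\|$ on the right-hand side without justifying the sign), and the root cause is an internal sign inconsistency: Example~\ref{ex:metric-space-euclidean} and the statement of Proposition~\ref{prop:example-frechet} take $H(x)(y,z) = T(z-y)$, while the proof of Proposition~\ref{prop:example-frechet} actually bounds $F(x) - F(\xbar) - \nabla F(x)^T(x - \xbar)$, i.e. uses $H(x)(y,z) = T(y-z)$. So the gap you inherited is a gap in the source, but it should not have been dismissed as a routine norm manipulation. A correct proof requires fixing the convention: either flip the sign of $H(x,y)$ in~\eqref{eq:def-geodesical-subadditive}, or flip the sign in the numerator of~\eqref{eq:def-newton-diff-weak-metric}, or change the Newton step to $H^{-}(x, F(x))$; under any one of these repairs your argument, and the paper's, goes through verbatim.
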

\begin{proof}
  From the Archimedean principle, there exists $\varepsilon > 0$ such that
  $(c + \varepsilon) \Omega < 1$. Based on~\eqref{eq:def-newton-diff-weak-metric}, there
  exists a neighborhood $\xbar \in V \subseteq U$ such that
  \begin{equation}\label{eq:newton-type-method-is-a-contraction-metric}
    \forall x \in V,\quad \sup_{H \in \mathcal{H}F(x)} \|F(x) - F(\xbar) - H(x)(x, \xbar)\| \le (c + \varepsilon)\dist(x, \xbar).
  \end{equation}
  Let $y \in \mathcal{N}_{\mathcal{H}F}(x)$ such that $y = H^{-}(x, -F(x))$ with $H \in \mathcal{H}F(x)$. Then
  \begin{align*}
    \dist(y, \xbar) &=  \dist({H}^{-}(\xbar, -F(x)), \xbar) \nonumber \\
                &=  \dist({H}^{-}(\xbar, -F(x)), {H}^{-}(\xbar, 0)) \nonumber \\
                &=  \dist({H}^{-}(\xbar, -F(x)), {H}^{-}(\xbar, -F(\xbar))).
  \end{align*}
  Here we use the property from~\eqref{eq:def-geodesical-subadditive}
  and then~\eqref{eq:newton-type-method-is-a-contraction-metric} to
  yield
  \begin{align*}
    \dist(y, \xbar) &=  \dist({H}^{-}(\xbar, -F(x)), {H}^{-}(\xbar, -F(\xbar))) \nonumber \\
                &\le \||{H}^{-}\||\|F(x) - F(\xbar) - H(x)(x, \xbar)\| \nonumber \\
                &\le (c + \varepsilon)\||{H}^{-}\||\dist(x, \xbar) \nonumber \\
                &\le (c + \varepsilon)\||{H}^{-}\||\dist(x, \xbar) \nonumber \\
                &\le (c + \varepsilon) \Omega \dist(x, \xbar),
  \end{align*}
  with $(c + \varepsilon) \Omega < 1$, showing the conclusion that the mapping is a
  quasi-contraction on $V$.
\end{proof}

Because of the Banach Fixed Point Theorem, quasi-contractivity is sufficient
to guarantee linear convergence of the iterates.
\begin{corollary}\label{cor:convergence of weak newton differentiable maps}
  Let $F: U \subseteq \bbM \to \bbRn$ be pointwise weakly Newton differentiable at $\xbar \in \bbM$ with
  $F(\xbar) = 0$.
  Denote the Newton differential of $F$ by $\mathcal{H}F$ and assume that $\forall x \in U$ all
  $H \in \mathcal{H}(x)$ are inversely compatible mappings, that is $\mathcal{H}(x) \subseteq GS_n(\bbM)$.
  Furthermore, assume that the set $\bigcup_{x \in U}\{\||{{H}^{-}}\||~|~H \in \mathcal{H}F(x)\}$
  is bounded by $\Omega > 0$ and let $c > 0$ be the limit
  in~\eqref{eq:def-newton-diff-weak-metric}, i.e.
  \begin{equation*}
    \lim_{x \to \xbar}\sup_{H \in \mathcal{H}F(x)}\frac{\|F(x) - F(\xbar) - H(x, \xbar)\|}{\dist(x, \xbar)} = c
  \end{equation*}
  be such that $c \Omega < 1$. Then there exists $V \subseteq \bbM$ such that the sequence $
  {\{x^k\}}_{k \in \bbN}$ generated
  by $\mathcal{N}_F$ is linearly convergent to $\xbar$ for all $x^0 \in V$.
\end{corollary}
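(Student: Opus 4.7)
The plan is to combine the quasi-contraction conclusion of Proposition \ref{prop:weak-metric-newton-is-quasicontraction} with the Banach Fixed Point Theorem \ref{t:BFP}, so the corollary is essentially a packaging result. There are really only two things to verify: (i) that $\bar{x}$ is genuinely a fixed point of $\mathcal{N}_{\mathcal{H}F}$, and (ii) that the quasi-contractivity conclusion on some neighborhood $V$ from the preceding proposition can be fed directly into Theorem \ref{t:BFP}.

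First, I would check that $\bar{x} \in \mathcal{N}_{\mathcal{H}F}(\bar{x})$. Since $F(\bar{x}) = 0$ and, by hypothesis, every $H \in \mathcal{H}F(\bar{x})$ lies in $GS_n(\bbM)$, the definition of inversely compatible maps (Definition \ref{def:inversley compatible}) gives $H^{-}(\bar{x}, -F(\bar{x})) = H^{-}(\bar{x}, 0) = \bar{x}$. Thus $\bar{x} \in \mathcal{N}_{\mathcal{H}F}(\bar{x})$ for every admissible selection $H$, so in fact $\mathcal{N}_{\mathcal{H}F}(\bar{x}) = \{\bar{x}\}$.

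Next, I would invoke Proposition \ref{prop:weak-metric-newton-is-quasicontraction} to obtain a neighborhood $V$ of $\bar{x}$ on which $\mathcal{N}_{\mathcal{H}F}$ is a quasi-contraction at $\bar{x}$ with some constant $c' = (c + \varepsilon)\Omega < 1$. Combined with the fixed-point identity above, this puts us exactly in the hypotheses of Theorem \ref{t:BFP}: a set-valued quasi-contraction at a known fixed point. The theorem then produces a (possibly smaller) neighborhood $V' \subseteq V$ such that $\mathcal{N}_{\mathcal{H}F}$ maps $V'$ into $V'$ and any sequence $\{x^k\}_{k \in \bbN}$ with $x^0 \in V'$ and $x^{k+1} \in \mathcal{N}_{\mathcal{H}F}(x^k)$ converges at least linearly to $\bar{x}$, which is the claim after renaming $V'$ back to $V$.

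There is no real obstacle here; the only subtlety worth highlighting in the write-up is that the hypotheses of Definition \ref{def:newton quasi metric spaces} require $\mathcal{N}_{\mathcal{H}F}$ to be proper, which is implicit once we know that every $H \in \mathcal{H}F(x) \subseteq GS_n(\bbM)$ admits a quasi inverse $H^{-}$, so $\mathcal{N}_{\mathcal{H}F}(x)$ is nonempty on $V$. With that remark the proof is a one-line reference: apply Proposition \ref{prop:weak-metric-newton-is-quasicontraction} to conclude quasi-contractivity, note $\bar{x}$ is a fixed point, and invoke Theorem \ref{t:BFP}.
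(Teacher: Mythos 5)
Your proposal is correct and follows essentially the same route as the paper, which presents the corollary as an immediate consequence of Proposition~\ref{prop:weak-metric-newton-is-quasicontraction} and the Banach Fixed Point Theorem~\ref{t:BFP}. Your explicit verification that $\mathcal{N}_{\mathcal{H}F}(\xbar) = \{\xbar\}$ via $H^{-}(\xbar, 0) = \xbar$, together with your remark on properness of $\mathcal{N}_{\mathcal{H}F}$, usefully fills in details the paper leaves implicit.
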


Under the stronger assumption of Newton differentiability (as opposed to weak
Newton differentiability) the bound on $c \Omega$ can be removed. This is
expected, because under Newton differentiability, the constant $c$ is equal to 0
and as such the bound $c \Omega$ is satisfied for any $\Omega > 0$.
\begin{proposition}\label{prop:metric-newton-is-quasicontraction}
  Let $F: U \subseteq \bbM \to \bbRn$ be pointwise Newton differentiable at $\xbar$ with $F(\xbar) = 0$.
  Denote the Newton differential of $F$ by $\mathcal{H}F$ and assume that $\forall x \in U$ all
  $H \in \mathcal{H}(x)$ are inversely compatible mappings, that is $\mathcal{H}(x) \subseteq GS_n(\bbM)$.
  Assume that the set $\bigcup_{x \in U}\{\||{{H}^{-}}\||~|~H \in \mathcal{H}F(x)\}$
  is bounded by $\Omega > 0$. Then there exists a neighborhood $\xbar \in V \subseteq \bbM$ such that the
  mapping $\mathcal{N}_{\mathcal{H}F}$ is a quasi-contraction on $V$.
\end{proposition}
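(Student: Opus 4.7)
The plan is to reduce this proposition to Proposition~\ref{prop:weak-metric-newton-is-quasicontraction} by exploiting the strengthened hypothesis. Under pointwise Newton differentiability the limit in~\eqref{eq:def-newton-diff-metric} equals zero, so for every $\varepsilon > 0$ there is a neighborhood $V_\varepsilon \subseteq U$ of $\xbar$ on which
\[
\sup_{H \in \mathcal{H}F(x)}\|F(x) - F(\xbar) - H(x, \xbar)\| \le \varepsilon \dist(x, \xbar)
\]
for every $x \in V_\varepsilon$. This is exactly the inequality~\eqref{eq:newton-type-method-is-a-contraction-metric} that drove the weak-version argument, except that the constant $c + \varepsilon$ is now replaced by the free parameter $\varepsilon$. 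Because $c = 0$, the side condition $c\Omega < 1$ disappears and we have complete freedom to pick $\varepsilon < 1/\Omega$ (for concreteness, $\varepsilon = 1/(2\Omega)$), guaranteeing $\varepsilon \Omega < 1$ irrespective of how large $\Omega$ is.

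With $V := V_\varepsilon$ fixed, I would then copy the chain of identities from the proof of Proposition~\ref{prop:weak-metric-newton-is-quasicontraction} verbatim. For $x \in V$ and $y \in \mathcal{N}_{\mathcal{H}F}(x)$ with $y = H^{-}(x, -F(x))$ for some $H \in \mathcal{H}F(x) \cap GS_n(\bbM)$, I use $F(\xbar) = 0$ together with the defining property $H^{-}(\xbar, 0) = \xbar$ from Definition~\ref{def:inversley compatible} to rewrite $\xbar = H^{-}(\xbar, -F(\xbar))$, and then apply~\eqref{eq:def-geodesical-subadditive} to obtain
\[
\dist(y, \xbar) \le \||H^{-}\||\,\|F(x) - F(\xbar) - H(x, \xbar)\| \le \varepsilon \Omega \dist(x, \xbar),
\]
where the last step uses both the uniform bound $\||H^{-}\|| \le \Omega$ and the Newton-differentiability estimate above. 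Since $\varepsilon \Omega < 1$, this shows that $\mathcal{N}_{\mathcal{H}F}$ is a quasi-contraction on $V$ with constant $\varepsilon\Omega$.

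I do not expect any serious obstacle, since the entire simplification is structural and no new analytical machinery beyond what was developed for the weak case is required. The only point worth verifying carefully is that the uniform estimate $\||H^{-}\|| \le \Omega$ is indeed applicable to every $H$ that realizes the supremum, but this is guaranteed by the assumption $\mathcal{H}(x) \subseteq GS_n(\bbM)$ together with the hypothesis that $\bigcup_{x \in U}\{\||H^{-}\||\,|\,H \in \mathcal{H}F(x)\}$ is bounded by $\Omega$. The argument is strictly shorter than that of Proposition~\ref{prop:weak-metric-newton-is-quasicontraction} precisely because $c = 0$ eliminates the Archimedean step and the quantitative restriction on $\Omega$.
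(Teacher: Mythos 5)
Your proposal is correct and takes essentially the same route as the paper: the paper likewise observes that pointwise Newton differentiability forces $c=0$, picks a neighborhood on which the remainder supremum is below a constant whose product with $\Omega$ is less than $1$, and then invokes the chain of estimates from the proof of Proposition~\ref{prop:weak-metric-newton-is-quasicontraction} verbatim. Your explicit choice $\varepsilon = 1/(2\Omega)$ is a minor clarification of the same idea.
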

\begin{proof}
  From the definition of Newton differentiability there exists a neighborhood
  $\xbar \in V \subseteq U$ and a constant $c+\varepsilon$ with $(c+\varepsilon) \Omega < 1$ such that
  \begin{equation*}
    \forall x \in V, \quad, \sup_{H \in \mathcal{H}F(x)} \|F(x) - F(\xbar) - H(x)(x, \xbar)\| \le (c + \varepsilon)\dist(x, \xbar).
  \end{equation*}
  From this, the rest of the proof proceeds exactly as in the proof of
  Proposition~\ref{prop:weak-metric-newton-is-quasicontraction}.
\end{proof}

After obtaining convergence of the iterates to a fixed point, we can use
the rate provided by Newton differentiability to yield a convergence
rate for Newton-type methods.
\begin{theorem}[Superlinear Convergence of Newton-type Methods in Quasi-Metric
  Spaces]\label{thm:conv-newton-diff-metric}
  Let $F: U \subseteq \bbM \to \bbRn$ be pointwise Newton differentiable at $\xbar \in U$ with $F(\xbar) = 0$.
  Denote the Newton differential of $F$ by $\mathcal{H}F$ and assume that $\forall x \in U$ all
  $H \in \mathcal{H}F(x)$ are inversely compatible mappings, that is $\mathcal{H}(x) \subseteq GS_n(\bbM)$.
  Furthermore, assume that the set $\bigcup_{x \in U}\{\||{{H}^{-}}\||~|~H \in \mathcal{H}F(x)\}$
  is bounded by $\Omega > 0$. Then any sequence ${\{x^k\}}_{k \in \mathbb{N}}$ generated
  by~\eqref{eq:Metric-Newton-type-method} converges superlinearly to $\xbar$ for all
  $x^0$ near $\xbar$.
\end{theorem}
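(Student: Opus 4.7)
The plan is to bootstrap from the linear convergence already guaranteed by Proposition~\ref{prop:metric-newton-is-quasicontraction} and then extract the superlinear rate from the fact that, under genuine (not merely weak) pointwise Newton differentiability, the ratio in~\eqref{eq:def-newton-diff-metric} tends to $0$.

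First, I would apply Proposition~\ref{prop:metric-newton-is-quasicontraction} directly: it produces a neighborhood $V \ni \xbar$ on which $\mathcal{N}_{\mathcal{H}F}$ is a quasi-contraction with $\mathcal{N}_{\mathcal{H}F}(\xbar) = \{\xbar\}$ (since $F(\xbar) = 0$ forces $H^{-}(\xbar, -F(\xbar)) = H^{-}(\xbar, 0) = \xbar$). Invoking Theorem~\ref{t:BFP} on $V$ gives that for any $x^0 \in V$, the sequence $\{x^k\}_{k\in\bbN}$ generated by~\eqref{eq:Metric-Newton-type-method} is well defined, stays in $V$, and converges at least linearly to $\xbar$.

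Next, I would replay the chain of estimates that appears in the proof of Proposition~\ref{prop:weak-metric-newton-is-quasicontraction}, but without fixing a single $\varepsilon$. Set
\begin{equation*}
  c^k := \Omega \sup_{H \in \mathcal{H}F(x^k)} \frac{\|F(x^k) - F(\xbar) - H(x^k, \xbar)\|}{\dist(x^k, \xbar)}
\end{equation*}
whenever $x^k \neq \xbar$ (and $c^k := 0$ otherwise, in which case the iteration has already terminated). For any $x^{k+1} \in \mathcal{N}_{\mathcal{H}F}(x^k)$ of the form $H^{-}(x^k, -F(x^k))$, the same manipulation as before yields
\begin{equation*}
  \dist(x^{k+1}, \xbar)
  = \dist\!\bigl(H^{-}(x^k, -F(x^k)), H^{-}(x^k, -F(\xbar))\bigr)
  \le \||H^{-}\||\,\|F(x^k) - F(\xbar) - H(x^k, \xbar)\|
  \le c^k \dist(x^k, \xbar),
\end{equation*}
using $F(\xbar) = 0$, the quasi-inverse property~\eqref{eq:def-geodesical-subadditive}, and the uniform bound $\||H^{-}\|| \le \Omega$.

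Finally, I would argue $c^k \to 0$. Since the linear convergence from step one gives $x^k \to \xbar$, the pointwise Newton differentiability~\eqref{eq:def-newton-diff-metric} says exactly that
\begin{equation*}
  \lim_{k \to \infty} \sup_{H \in \mathcal{H}F(x^k)} \frac{\|F(x^k) - F(\xbar) - H(x^k, \xbar)\|}{\dist(x^k, \xbar)} = 0,
\end{equation*}
so $c^k \to 0$, matching the definition of superlinear convergence. The only subtlety I expect is the book-keeping around $x^k = \xbar$ (finite termination) and ensuring $x^0$ is chosen so that every iterate remains in the neighborhood where Newton differentiability yields the vanishing ratio; both are handled by taking $x^0$ in the $V$ supplied by step one and using that $V$ is forward-invariant under $\mathcal{N}_{\mathcal{H}F}$.
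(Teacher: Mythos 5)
Your proposal takes essentially the same route as the paper's own proof: establish convergence of the iterates via Proposition~\ref{prop:metric-newton-is-quasicontraction} and Theorem~\ref{t:BFP}, bound $\dist(x^{k+1},\xbar)$ by $\Omega$ times the Newton-differentiability remainder using the quasi-inverse estimate~\eqref{eq:def-geodesical-subadditive}, and then let pointwise Newton differentiability along the convergent sequence $x^k\to\xbar$ force the ratio to zero. One small slip: in your middle display the second point should be $\xbar = H^{-}(\xbar, -F(\xbar))$, not $H^{-}(x^k, -F(\xbar))$ (the latter equals $x^k$, not $\xbar$), but the resulting estimate and the rest of the argument are unaffected and match the paper.
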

\begin{proof}
  Clearly, $\xbar$ is a fixed point of $\mathcal{N}_F$ because
  \begin{equation*}
    \forall H \in \mathcal{H}F(x)\quad {H}^{-}(\xbar, F(\xbar)) = {H}^{-}(\xbar, 0) = \xbar.
  \end{equation*}
  Using Proposition~\ref{prop:metric-newton-is-quasicontraction} we can conclude
  from Theorem~\ref{t:BFP} that $x^k$ converges to $\xbar$.

  For each $k \in \bbN$, we can use~\eqref{eq:def-geodesical-subadditive}, to show that
  \begin{align}
    \dist(x^{k+1}, \xbar) &= \dist({H(x^k)}^{-}(\xbar, F(x^k)), \xbar) \nonumber \\
                      &= \dist({H(x^k)}^{-}(\xbar, F(x^k)),
                        {H(x^k)}^{-}(\xbar, F(\xbar))) \nonumber \\
                      &\le \||{H(x)}^{-}\||\|F(x) - F(\xbar) - H(x)(x, \xbar)\|.
                        \label{e:random-name-metric}
  \end{align}

  We use the bound on $\||{H(x)}^{-}\||$, to show that
  \begin{equation*}
    \dist(x^{k+1}, \xbar) \le \Omega \|F(x^k) - F(\xbar) - H^k(x^k - \xbar)\|,
  \end{equation*}
  using~\eqref{eq:def-geodesical-subadditive} from the definition of inversely
  compatible pseudo-linear maps.
  Next, we need Newton differentiability to conclude that there exists a sequence,
  ${\{c_k\}}_{k \in \bbN}$, converging to $0$ such that
  \begin{equation}\label{e:random--metric-name3}
    \|F(x^k) - F(\xbar) - H^k(x^k, \xbar)\| \le c_k \dist(x^k, \xbar).
  \end{equation}
  Together,~\eqref{e:random-name-metric} and~\eqref{e:random--metric-name3} yield
  \begin{equation*}
    \dist(x^{k+1}, \xbar) \le \Omega c_k \dist(x^k, \xbar),
  \end{equation*}
  with $ \Omega c_k \to 0$ as $k \to \infty$, and hence the sequence convergences
  superlinearly. Since the sequence $\{x^k\}$ was arbitrary, this holds for all
  sequences, as claimed.
\end{proof}

\subsection{Higher Convergence Rates}
In the classic theory of Newton's method, quadratic convergence rates are
attained under stronger smoothness assumptions. In order for us to capture
this behavior, we have to expand the considered notion of Newton differentiability.
\begin{definition}[Rate of Newton Differentiability]
  Let $F: U \subseteq \bbM \to \bbRn$ be pointwise Newton differentiable at $\xbar$, with Newton
  differential $\mathcal{H}F:U \setto S_n(\bbM)$. The maximal number $\gamma \ge 1$ for which one has
  \begin{equation}\label{eq:def-newton-diff--rate}
    \lim_{x \to \xbar}\sup_{H \in \mathcal{H}F(x)}\quad
    \frac{\|F(x) - F(\xbar) - H(x, \xbar)\|}{{\dist(x, \xbar)}^\gamma} < \infty
  \end{equation}
  is called the \emph{rate} of Newton differentiability.
  Furthermore, if the limit in~\eqref{eq:def-newton-diff--rate} is $0$, $\gamma$ is
  called a \emph{super-rate}.
\end{definition}

These stronger versions of Newton differentiability will translate directly
into higher convergence rates of the Newton-type method associated to the
Newton differential.
\begin{theorem}[Faster Convergence]
  Let $F: U \subseteq \bbM \to \bbRn$ be pointwise Newton differentiable at $\xbar$ with $F(\xbar) = 0$
  with {\lparan}super-{\rparan}rate $\gamma > 1$.
  Denote the Newton differential of $F$ by $\mathcal{H}F$ and assume that $\forall x \in U$ all
  $H \in \mathcal{H}F(x)$ are inversely compatible mappings, that is $\mathcal{H}(x) \subseteq GS_n(\bbM)$.
  Furthermore, assume that the set $\bigcup_{x \in U}\{\||{{H}^{-}}\||~|~H \in \mathcal{H}F(x)\}$
  is bounded by $\Omega$. Then any sequence ${\{x^k\}}_{k \in \mathbb{N}}$ generated
  by~\eqref{eq:Metric-Newton-type-method} converges with
  {\lparan}super-{\rparan}rate $\gamma$ to $\xbar$ for all $x^0$
  near $\xbar$.
\end{theorem}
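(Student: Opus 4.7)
The plan is to mirror the proof of Theorem~\ref{thm:conv-newton-diff-metric} almost line for line, with the only substantive change being that the final remainder estimate is upgraded from first to $\gamma$-th order. I begin with the preliminary observation that pointwise Newton differentiability with (super-)rate $\gamma > 1$ is strictly stronger than pointwise Newton differentiability in the sense of~\eqref{eq:def-newton-diff-metric}: factoring
\begin{equation*}
    \frac{\|F(x) - F(\xbar) - H(x, \xbar)\|}{\dist(x, \xbar)}
    = \frac{\|F(x) - F(\xbar) - H(x, \xbar)\|}{{\dist(x, \xbar)}^{\gamma}}\cdot{\dist(x, \xbar)}^{\gamma - 1},
\end{equation*}
the first factor remains bounded by hypothesis while the second vanishes as $x \to \xbar$. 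Consequently Proposition~\ref{prop:metric-newton-is-quasicontraction} supplies a neighborhood $V$ of $\xbar$ on which $\mathcal{N}_{\mathcal{H}F}$ is a quasi-contraction, and Theorem~\ref{t:BFP} yields $x^k \to \xbar$ for every $x^0 \in V$.

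Once convergence is in hand, I would reuse verbatim the chain of estimates that led to~\eqref{e:random-name-metric}: using $\xbar = {H(x^k)}^{-}(\xbar, -F(\xbar))$, the inverse compatibility relation~\eqref{eq:def-geodesical-subadditive}, and the uniform bound $\||{H(x^k)}^{-}\|| \le \Omega$, one arrives at
\begin{equation*}
    \dist(x^{k+1}, \xbar) \le \Omega\,\|F(x^k) - F(\xbar) - H(x^k)(x^k, \xbar)\|.
\end{equation*}
The (super-)rate hypothesis, applied to the admissible selection $H(x^k) \in \mathcal{H}F(x^k)$ actually used by the iteration, upgrades this estimate directly: in the super-rate case it produces a sequence $c_k \to 0$ for which $\dist(x^{k+1}, \xbar) \le \Omega c_k\,{\dist(x^k, \xbar)}^\gamma$ with $\Omega c_k \to 0$, which is precisely super-rate $\gamma$; in the plain rate case there exist a finite $M$ and, for any $\varepsilon>0$, a sub-neighborhood on which $\dist(x^{k+1}, \xbar) \le \Omega(M + \varepsilon)\,{\dist(x^k, \xbar)}^\gamma$.

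The only subtlety, and the main obstacle, is reconciling the last bound with the paper's definition of rate $\gamma$, which insists on an absolute constant $c < 1$. In the super-rate case this is automatic for $k$ large enough. In the plain rate case, because $\gamma > 1$, I would shrink $V$ further so that ${\dist(x,\xbar)}^{\gamma - 1}$ is small enough throughout $V$ to render $\Omega(M+\varepsilon){\dist(x,\xbar)}^{\gamma - 1}$ strictly below $1$; the quasi-contractive invariance of $V$ then keeps the iterates under an absolute constant $c < 1$. Beyond this bookkeeping, the proof is an exact transcription of Theorem~\ref{thm:conv-newton-diff-metric}.
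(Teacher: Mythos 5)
Your proof follows the paper's own argument essentially verbatim: the paper's proof simply says ``follow the reasoning of Theorem~\ref{thm:conv-newton-diff-metric}, use the rate $\gamma$ in~\eqref{e:random-name-metric} to obtain $\dist(x^{k+1},\xbar) \le c_k\,{\dist(x^k,\xbar)}^\gamma$, and if $\gamma$ is a super-rate then $c_k\to 0$.'' Your factoring observation that (super-)rate $\gamma>1$ implies ordinary pointwise Newton differentiability is correct (and in fact redundant, since the rate definition already presupposes pointwise Newton differentiability), and your invocation of Proposition~\ref{prop:metric-newton-is-quasicontraction} followed by the $\Omega$-bound and the rate estimate is exactly what the paper does.

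The one place you add something beyond the paper is the discussion of the $c<1$ clause in the definition of rate-$\gamma$ convergence, and there the fix you propose does not close the issue: shrinking $V$ so that $\Omega(M+\varepsilon){\dist(x,\xbar)}^{\gamma-1}<1$ controls the \emph{linear} factor $\dist(x^{k+1},\xbar)/\dist(x^k,\xbar)$, but the constant multiplying ${\dist(x^k,\xbar)}^\gamma$ remains $\Omega(M+\varepsilon)$ and cannot be driven below $1$ by restricting the neighborhood (both $\Omega$ and $M$ are determined independently of $V$). So in the plain-rate case, what you actually establish is $\dist(x^{k+1},\xbar)\le \Omega(M+\varepsilon){\dist(x^k,\xbar)}^\gamma$ with no guarantee that $\Omega(M+\varepsilon)<1$. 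This is, however, precisely what the paper's own proof establishes --- it produces a convergent sequence $c_k$ without addressing whether it eventually sits below $1$ --- so you have not introduced a new gap, you have merely surfaced a latent one in the paper's definition-versus-proof alignment. In the super-rate case both arguments are airtight since $c_k\to 0$.
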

\begin{proof}
  Following the exact line of reasoning as in the proof of
  Theorem~\ref{thm:conv-newton-diff-metric}, we can use the rate $\gamma$
  in~\eqref{e:random-name-metric}, concluding that there exists a convergent
  sequence ${\{c_k\}}_{k \in \bbN}$ such that
  \begin{equation*}
    \dist(x^{k + 1}, \xbar) \le c_k{\dist(x^k, \xbar)}^{\gamma}.
  \end{equation*}
  Furthermore, if $\gamma$ is a super rate, we know that $\lim_{k \to \infty}c_k = 0$,
  proving the desired conclusion.
\end{proof}

\section{The Kantorovich-type Theorem}
In order to obtain convergence rates for Newton-type theorems, the existence of
a zero must be assumed. In this section, we aim to establish sufficient
conditions for the existence of such a point, thus in effect providing an
inverse function theorem. To this effect, a vast body of work has been
undertaken in the past, culminating with Smale's $\alpha$-Theory~\cite{Sma86Newt} and
the Newton-Kantorovich Theorem. The former provides a test for the solvability
of the equation by using information about all the function's derivatives at a
given point and as such it is not suitable for our nonsmooth and nonlinear
context. The original Newton-Kantorovich Theorem was first proven by Leonid
Kantorovich in 1948 in~\cite{Kan48Func} and guarantees the existence of a zero
of a Lipschitz smooth function by requiring only information about the function
and its Jacobian at a given point. This theorem can be easily adapted to our
Newton differentiability context by strengthening the conditions satisfied by
the values of the function and its Newton differential at a point.

\begin{definition}\label{def:strong-invertably-compatible}
  An inversely compatible pseudo-linear mapping $H:\bbM \times \bbM \to \bbRn$ is called
  \emph{strongly inversely compatible} if
  \begin{equation*}
    \forall x, \forall v \in \bbRn,\quad v = H(x, H^{-}(x, v)),
  \end{equation*}
  and
  \begin{equation}
    \forall x, \forall v \in \bbRn,\quad x =H^{-}(x, v) \Rightarrow v = 0.
  \end{equation}
  The set of all such mappings is denoted by $SGS_n(\bbM)$.
\end{definition}
\begin{remark}
  This stronger invertibility condition is not required in order to obtain
  convergence rate results, but it is required for the Kantorovich-type theorem.
\end{remark}
\begin{remark}
  In the context of Example~\ref{ex:metric-space-euclidean}, an invertible
  linear map $T$ induces a strong-inversely compatible map.
\end{remark}

\begin{definition}\label{def:h-type-smooth}
  A mapping $H:\bbM \to GS_n(\bbM)$ is called \emph{pointwise h-smooth at $x^0$} if
  there exist $\kappa > 0$ and $\alpha > 0$ such that
  \begin{equation*}
    \forall x \in \bbM, \forall y, z \in \bbM\quad \dist({H(x)}^{-}(x, H(x^0)(z, y)), x)
    \le (1 + \kappa{\dist(x, x^0)}^{\alpha})\dist(y, z).
  \end{equation*}
  A mapping $\mathcal{H}:\bbM \setto GS_n(\bbM)$ has a \emph{pointwise h-smooth selection at $x^0$} if
  there exists a pointwise h-smooth at $x^0$ mapping $H:\bbM \to GS_n(\bbM)$ with
  $H(x) \in \mathcal{H}(x)$ for all $x \in \bbM$.
\end{definition}
\begin{remark}
  In the setting of Example~\ref{ex:metric-space-euclidean},
  $H:\bbRn \to GS_n(\bbRn)$ induced
  by $T:\bbRn \to \bbRnxn$ is h-smooth at $x^0$ if $T$ is pointwise Hölder
  continuous at $x^0$, with constant $\alpha > 0$, and $T^{-1}$ is uniformly bounded,
  i.e $\exists M < \infty$ such that
  for all $x$, $\|{T(x)}^{-1}\| \le M$. To see
  this, we compute
  \begin{align} \label{eq:in-remark-h-smooth}
    \dist(x, {H(x)}^{-}(x, H(x^0)(z, y)))
    &= \|x - {H(x)}^{-}(x, H(x^0)(z, y))\| \nonumber \\
    &= \|x - x - {T(x)}^{-1}H(x^0)(z, y)\| \nonumber \\
    &= \|{T(x)}^{-1}T(x^0)(y - z)\| \nonumber \\
    &\le \|{T(x)}^{-1}T(x^0)\|\|y - z\| \nonumber \\
    &= \|{T(x)}^{-1}(T(x^0) - T(x)) + \Id\|\|y - z\| \nonumber \\
    &\le (1 + \|{T(x)}^{-1}\|\|T(x^0) - T(x)\|)\|y - z\|.
  \end{align}
  Using the pointwise Hölder continuity of $T$, we know that
  \begin{equation*}
    \|T(x^0) - T(x)\| \le \kappa\|x - x^0\|^\alpha
  \end{equation*}
  and from~\eqref{eq:in-remark-h-smooth} we derive the conclusion.

  This example justifies the name h-smooth.
\end{remark}

The proof of the next theorem uses what Kantorovich called the general theory
of approximate methods, which solves problems by first constructing
an easier to solve instance of the problem and then relating this to the
original instance. The key idea of the proof developed here is that
of analyzing a Newton-type method applied to the auxiliary function defined by
equations~\eqref{eq:kantorovich-key-property-metric}%
~\eqref{eq:kantotovich-ft-ge-zero-mertric}%
~\eqref{eq:kantorovich-assumption-f2-metric}
and~\eqref{eq:kantorovich-assumption-f3-metric}.
This idea follows the lines presented by Ortega in~\cite{Ort68Then}.
\begin{theorem}[Kantorovich-type Theorem on Metric Spaces]\label{thm:kant}
  Let $\bbM$ be a quasi-metric space and $F: U \subseteq \bbM \to \bbRn$ be uniformly Newton
  differentiable on $U$ with rate $\gamma$ and the Newton differential
  $\mathcal{H}F:U \setto SGS_n(\bbM)$ having a h-smooth selection
  (see Definition~\ref{def:h-type-smooth})
  denoted by $H_F$. Concretely, let $L>0 $ and $\gamma \in [1, 2]$ be such that
  \begin{equation*}
    \forall x \in \bbM, \forall y, z \in \bbM\quad \dist(x, {H_F(x)}^{-}(x, H_F(x^0)(z, y)))
    \le (1 + L{\dist(x, x^0)}^{\gamma-1})\dist(y, z)
  \end{equation*}
  and such that
  \begin{equation*}
    \forall x, y \in U,\quad  \|F(x) - F(y) - H_F(x)(x, y)\| \le \frac{L}{2}{\dist(x, y)}^{\gamma}.
  \end{equation*}

  Let $x^0 \in U$ and assume there exists $B < \infty$ such that
  \begin{equation*}
    B = \sup_{x \ne y \in \bbM}\frac{\dist(x, y)}{\|H_F(x^0)(x, y)\|}
  \end{equation*}
  and set $ \eta:=\dist(x^0, {H_F(x^0)}^{-}(x^0, F(x^0)))$.
  Suppose further that there exists a constant $M^* > 0$ such that for all
  $x \in U$
  \begin{equation*}
    \||{H_F(x)}^{-}\|| \le M^*.
  \end{equation*}
  Assume that for all $x \in U$,
  \begin{equation*}
    {H_F(x)}^{-}(x, -F(x)) \in U.
  \end{equation*}

  Furthermore assume there exists $\tbar \in (0, \infty)$ and a function
  $f \in \mathcal{C}^2[0, \tbar]$ with Lipschitz continuous second derivative such that for all
  $t < \tbar$
  \begin{enumerate}[(a)]
  \item
    \begin{equation}\label{eq:kantorovich-key-property-metric}
      \frac{LB}{2}{\left(-\frac{f(t)}{f'(t)}\right)}^\gamma \le
      f\left(t - \frac{f(t)}{f'(t)}\right),
    \end{equation}
  \item
    \begin{equation}\label{eq:kantotovich-ft-ge-zero-mertric}
      f(0) = \eta,\quad
      f(t) > 0,\quad     f(\tbar) = 0,
    \end{equation}
  \item
    \begin{equation}\label{eq:kantorovich-assumption-f2-metric}
      f'(t) < 0,\quad f'(t) \ge -{(1 + L t^{\gamma-1})}^{-1},
    \end{equation}
  \item
    \begin{equation}\label{eq:kantorovich-assumption-f3-metric}
      f''(t) > 0.
    \end{equation}
  \end{enumerate}
  Then, for any $x^0$, the sequence ${\{x^k\}}_{k \in \mathbb{N}}$ is Cauchy and
  $\lim_{k \to \infty} F(x^k) = 0$.
\end{theorem}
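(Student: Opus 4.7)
The approach is Kantorovich's majorant technique adapted to the quasi-metric setting: run Newton's method on the scalar function $f$ to obtain a majorant sequence $\{t^k\} \subseteq [0, \tbar]$, and inductively show that $\{t^k\}$ controls $\{x^k\}$ step-by-step. First define $t^0 := 0$ and $t^{k+1} := t^k - f(t^k)/f'(t^k)$. Conditions (b)--(d) make $f$ positive, strictly decreasing, and strictly convex on $[0, \tbar)$ with $f(\tbar) = 0$; this is the standard setting in which Newton's iterates for a convex decreasing scalar function starting at $0$ form a monotone increasing sequence bounded above by $\tbar$, so $t^k \uparrow \tbar$ and in particular $\{t^k\}$ is Cauchy with $t^{k+1} - t^k \to 0$.

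The heart of the proof is the simultaneous induction $(\mathrm{I}_k)\colon \dist(x^0, x^k) \le t^k$ and $(\mathrm{II}_k)\colon \dist(x^k, x^{k+1}) \le t^{k+1} - t^k$. The base case uses $\eta = \dist(x^0, x^1)$ together with $t^1 - t^0 = -f(0)/f'(0) \ge f(0) = \eta$, which follows from (c) at $t = 0$ giving $|f'(0)| \le 1$. For the step, I would compute $\dist(x^{k+1}, x^{k+2})$ in three stages. First, using strong inverse compatibility of $H_F(x^0)$, set $y := {H_F(x^0)}^{-}(x^0, -F(x^{k+1}))$ so that $H_F(x^0)(x^0, y) = -F(x^{k+1})$; then the h-smooth selection property (Definition~\ref{def:h-type-smooth}) yields
\begin{equation*}
  \dist(x^{k+1}, x^{k+2}) \le \bigl(1 + L\,\dist(x^{k+1}, x^0)^{\gamma-1}\bigr)\,\dist(x^0, y).
\end{equation*}
Second, the definition of $B$ gives $\dist(x^0, y) \le B\,\|H_F(x^0)(x^0, y)\| = B\,\|F(x^{k+1})\|$. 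Third, combining the Newton-step identity $H_F(x^k)(x^k, x^{k+1}) = -F(x^k)$ (from strong inverse compatibility of $H_F(x^k)$) with the uniform rate-$\gamma$ Newton differentiability bound applied at $(x^k, x^{k+1})$ yields $\|F(x^{k+1})\| \le (L/2)\,\dist(x^k, x^{k+1})^\gamma$. Chaining the three estimates and inserting $\dist(x^{k+1}, x^0) \le t^{k+1}$ together with $\dist(x^k, x^{k+1}) \le t^{k+1} - t^k$ from $(\mathrm{I}_{k+1})$--$(\mathrm{II}_k)$ produces
\begin{equation*}
  \dist(x^{k+1}, x^{k+2}) \le \frac{LB}{2}\bigl(1 + L(t^{k+1})^{\gamma-1}\bigr)(t^{k+1} - t^k)^\gamma.
\end{equation*}

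Hypothesis (a) at $t = t^k$ reads $(LB/2)(t^{k+1} - t^k)^\gamma \le f(t^{k+1})$, while (c) at $t^{k+1}$ gives $1/|f'(t^{k+1})| \ge 1 + L(t^{k+1})^{\gamma-1}$; multiplied together they bound the right-hand side above by $-f(t^{k+1})/f'(t^{k+1}) = t^{k+2} - t^{k+1}$, closing $(\mathrm{II}_{k+1})$, and the triangle inequality then closes $(\mathrm{I}_{k+2})$. Telescoping $(\mathrm{II}_j)$ gives $\dist(x^k, x^{k+m}) \le t^{k+m} - t^k$, so $\{x^k\}$ inherits the Cauchy property from $\{t^k\}$; the residual bound $\|F(x^k)\| \le (L/2)(t^k - t^{k-1})^\gamma \to 0$ then delivers $F(x^k) \to 0$.

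The main obstacle is the three-stage estimate in the inductive step. In the quasi-metric world one cannot simply invert a Jacobian and read off distances, so the key move is to realise $-F(x^{k+1})$ explicitly as $H_F(x^0)(x^0, y)$ via strong inverse compatibility in order to feed it into the h-smooth property, trade this for a Euclidean norm via $B$, and recover the rate-$\gamma$ decay of the residual from uniform Newton differentiability combined with the Newton-step identity. The asymmetry of $\dist$ and the fact that both the Newton-step identity and the h-smooth bound fix specific orderings of their arguments will require careful bookkeeping throughout; this is where the extra assumption of strong inverse compatibility (Definition~\ref{def:strong-invertably-compatible}) is genuinely used, beyond what was needed for the superlinear convergence results of the previous section.
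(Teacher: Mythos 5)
Your proposal is correct and follows essentially the same majorant--induction strategy as the paper: construct the scalar Newton sequence $t^k \uparrow \tbar$ from $f$, then propagate step-by-step control of $\{x^k\}$ by chaining strong inverse compatibility (to rewrite $-F(x^{k+1})$ as $H_F(x^0)(z,y)$), the h-smooth bound, the constant $B$, and the rate-$\gamma$ Newton differentiability estimate, exactly as the paper does. The only differences are bookkeeping: the paper packages its inductive invariant as membership in the sets $\Sigma(t_k) = \{x : \dist(x^0,x)\le t_k,\ \dist(x, H_F(x^0)^{-}(x, -F(x)))\le f(t_k)\}$ and evaluates the frozen quasi-inverse at the current iterate $x^k$, whereas you track $\dist(x^0,x^k)\le t^k$ and $\dist(x^k,x^{k+1})\le t^{k+1}-t^k$ directly and evaluate the frozen quasi-inverse at $x^0$ --- both choices feed the same three estimates in the same order, so the arguments are equivalent.
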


\begin{proof}
  The first step of the proof consists in developing a convergence result
  for Newton's method applied to solving $f(t) = 0$. For this we consider
  the function $N:[0, \tbar] \to [0, \tbar]$ defined by
  \begin{equation*}
    N(t) = t - \frac{f(t)}{f'(t)}.
  \end{equation*}

  The case $\eta = 0$ in~\eqref{eq:kantotovich-ft-ge-zero-mertric} means that
  $F(x^0) = 0$, and we are done.
  Otherwise~\eqref{eq:kantotovich-ft-ge-zero-mertric} shows that $\tbar$ is a fixed
  point of $N$ i.e. $N(\tbar) = \tbar$. Furthermore, because $f(t) > 0$ for all
  $t \in (0, \tbar)$ we can deduce that $\tbar$ is the unique fixed point.

  Next, we have to analyze the behavior of the fixed point iteration of the
  mapping $N$. From the definition of $f$, we can easily
  conclude that for all $t \in (0, \tbar)$
  \begin{equation}\label{e:Thing1}
    -\frac{f(t)}{f'(t)} \ge 0,
  \end{equation}
  so $n(t) \ge t$.

  Using Taylor's Theorem, we can expand $f$ at the unique root $\tbar$
  \begin{equation*}
    0 = f(\tbar) = f(t) + f'(t)(\tbar - t) + \frac{1}{2}f''(\xi){(\tbar - t)}^2.
  \end{equation*}
  Simplifying and rearranging gives
  \begin{equation}\label{e:Thing2}
    \tbar - N(t) = -\frac{f''(\xi)}{2 f'(t)}{(\tbar - t)}^2 \ge 0,
  \end{equation}
  showing that $N(t) \le \tbar$.

  Equations~\eqref{e:Thing1} and~\eqref{e:Thing2} together show that for any
  $t_0$ the sequence ${(t_k)}_{k \in \bbN}$
  generated by
  \begin{equation}\label{eq:proof kantorovich def tk}
    t_{k+1} = N(t_k)
  \end{equation}
  is monotonically increasing and bounded, so
  convergent. Let $t^*$ denote the limit of this sequence and as such we can
  use the fact that $N(t^*) = t^*$ to conclude that $f(t^*) = 0$. Because
  of~\eqref{eq:kantotovich-ft-ge-zero-mertric}, we know that $\tbar$ is the unique zero
  of $f$ on $[0, \tbar]$, yielding $\tbar = t^*$. This constitutes the part of the proof
  in which a different instance of the problem is constructed and studied,
  as per the general outline of an approximate method described by Kantorovich.

  For the remainder of the proof we consider the sets
  \begin{equation*}
    \Sigma(t) = \{x \in U ~|~\|x - x^0\| \le t, \dist(x, {H_F(x^0)}^{-}(x, F(x))) \le f(t)\},
  \end{equation*}
  for any $t \in (0, \infty)$.
  These sets will help us relate the behavior of the Newton-type methods
  for $f$ and $F$.

  Let ${(x^k)}_{k \in \bbN}$ be generated by iterating $\mathcal{N}_F$, where
  \begin{equation*}
    x^{k + 1} = {H_F(x^k)}^{-}(x^k, -F(x^k)).
  \end{equation*}

  Now consider ${(t_k)}_{k \in \bbN}$ generated by $t_{k+1} = N(t_k)$ with $t_0 = 0$.
  We will show using induction that $x_{k} \in  \Sigma(t_k)$. The first step consists
  in concluding that $x^0 \in \Sigma(0)$, i.e.
  \begin{equation*}
    \eta = \dist(x^0, {H_F(x^0)}^{-}(x^0, F(x^0))) \le f(0)
  \end{equation*}
  and this is exactly the condition from~\eqref{eq:kantotovich-ft-ge-zero-mertric}.

  We proceed by assuming that $x^{k} \in  \Sigma(t_k)$ and showing that
  $x^{k+1} \in  \Sigma(t_{k+1})$. Indeed, using the \emph{strong inverse compatibility}
  of $H(x^0)$ from Definition~\ref{def:strong-invertably-compatible}, we can derive
  \begin{align}\label{eq:proof kant what is this step}
    \dist(x^{k+1}, x^k)
    &= \dist({H(x^{k})}^{-}(x^{k}, -F(x^k)), x^k) \nonumber \\
    &= \dist({H(x^{k})}^{-}(x^{k},  H(x^0)(x^k, {H(x^0)}^{-}(x^k, -F(x^k)))), x^k).
  \end{align}

  Using the \emph{pointwise h-smoothness} of $H$ at $x^0$ with $x^k$ and
  ${H(x^0)}^{-}(x^k, -F(x^k))$ as $y$ and $z$ respectively,
  in~\eqref{eq:proof kant what is this step} we obtain the bound
  \begin{equation*}
    \dist(x^{k+1}, x^k) \le (1 + L{\dist(x^k, x^0)}^{\gamma-1})\dist(x^k, {H(x^0)}^{-}(x^k, -F(x^k))).
  \end{equation*}
  From the definition of $\Sigma$ and the fact that $x^k \in \Sigma(t_k)$ we know that
  \begin{equation*}
    (1 + L{\dist(x^k, x^0)}^{\gamma-1})\dist(x^k, {H(x^0)}^{-}(x^k, -F(x^k)))
    \le (1 + L{t_k}^{\gamma-1})f(t_k),
  \end{equation*}
  and from~\eqref{eq:kantorovich-assumption-f2-metric}
  we can conclude the key bound
  \begin{equation}\label{eq:proof kant one last step in proof is this}
    \dist(x^{k+1}, x^k) \le -\frac{f(t_k)}{f'(t_k)}.
  \end{equation}

  In the next part of the proof, we use \emph{strong-inverse compatibility}
  of $H(x^k)$ to conclude that
  \begin{equation}\label{eq:proof kantorovich from strong invertably compatibililty}
    \|F(x^{k+1})\| = \|F(x^{k}) - F(x^{k+1})
    - {H(x^{k})}(x^k, {H(x^k)}^{-}(x^k, -F(x^k)))\|.
  \end{equation}
  Using Newton differentiability
  in~\eqref{eq:proof kantorovich from strong invertably compatibililty}, we can
  bound
  \begin{equation}\label{eq:proof kantorovich bound with gamma}
    \|F(x^{k+1})\| \le \frac{L}{2}{\dist(x^{k+1}, x^{k})}^\gamma.
  \end{equation}
  Combining~\eqref{eq:proof kantorovich bound with gamma}
  with~\eqref{eq:proof kant one last step in proof is this} and
  using~\eqref{eq:kantorovich-key-property-metric} gives
  \begin{equation}\label{eq:kant proof last part useful}
    \|F(x^{k+1})\| \le \frac{L}{2}{\left(-\frac{f(t_k)}{f'(t_k)}\right)}^\gamma \le \frac{f(t_{k+1})}{B}.
  \end{equation}
  The definition of $B$ then proves that
  \begin{equation}\label{eq:--new-- eq1}
    \dist(x^{k+1}, {H_F(x^0)}^{-}(x^{k+1}, F(x^{k+1})))
    \le B\|H_F(x^0)(x^{k + 1}, {H_F(x^0)}^{-}(x^{k+1}, F(x^{k+1})))\|,
  \end{equation}
  while the strong inverse compatibility shows
  \begin{equation}\label{eq:--new-- eq2}
    \|H_F(x^0)(x^{k + 1}, {H_F(x^0)}^{-}(x^{k+1}, F(x^{k+1})))\| = \|F(x^{k+1})\|.
  \end{equation}
  Combining~\eqref{eq:kant proof last part useful} with~\eqref{eq:--new-- eq1}
  and~\eqref{eq:--new-- eq2} gives
  \begin{equation*}
    \dist(x^{k+1}, {H_F(x^0)}^{-}(x^{k+1}, F(x^{k+1}))) \le f(t_{k+1}),
  \end{equation*}
  and this is one of the two requirements for $x^{k + 1} \in \Sigma(t_{k+1})$.

  Clearly, substituting in~\eqref{eq:proof kant one last step in proof is this}
  the definition of $t_{k+1}$ from~\eqref{eq:proof kantorovich def tk}
  \begin{equation}\label{eq:proof kant one last step in proof is this v2}
    \dist(x^{k+1}, x^k) \le t_{k+1} - t_k.
  \end{equation}

  It remains to show that $\dist(x^{k+1}, x^0) \le t_{k+1}$. Indeed, using the
  triangle inequality
  \begin{equation*}
    \dist(x^{k+1}, x^0) \le \dist(x^{k+1}, x^{k}) + \dist(x^k, x^0)
  \end{equation*}
  and from~\eqref{eq:proof kant one last step in proof is this v2} and the
  induction hypothesis
  \begin{equation*}
    \dist(x^{k+1}, x^0) \le t_{k+1} - t_k + t_k,
  \end{equation*}
  completing the induction part of the proof and shows that for all
  $k \in \bbN$, $x^{k} \in \Sigma(t_k)$.

  The last part of the proof consists in looking at the convergence of the
  sequence ${\{x^k\}}_{k \in \bbN}$. A simple telescoping argument shows that
  \begin{align*}
    \dist(x^{m}, x^{n}) &\le \dist(x^{m}, x^{m-1}) + \cdots + \dist(x^{n+1}, x^n) \nonumber \\
    &\le |t_m - t_{m-1}| + \cdots + |t_{n+1} - t_n| \nonumber \\
    &\le t_m - t_{m-1} + \cdots + t_{n+1} - t_n = t_m - t_n.
  \end{align*}
  Because the sequence ${\{t_k\}}_{k \in \bbN}$ is Cauchy we deduce that
  ${\{x^{k}\}}_{k \in \bbN}$ is Cauchy.

  In order to complete the proof, we take the limit as $k \to \infty$
  in~\eqref{eq:kant proof last part useful}, using the fact that $f$ is continuous
  and $f(\lim_{k \to \infty}t_k) = 0$ to conclude that
  \begin{equation*}
    \lim_{k \to \infty}\|F(x^k)\| \le \lim_{k \to \infty}\frac{f(t_k)}{\eta} = 0.
  \end{equation*}
\end{proof}

\begin{remark}
  As opposed to the convergence rate proof, the Kantorovich-type theorem
  generates a Cauchy sequence, and we need to further impose completeness on the
  quasi-metric space in order to obtain a limit point. Furthermore, we also
  need to assume continuity of $F$ at this limit point in order to be able
  to guarantee that $F(\lim_{k \to \infty}x^k) = 0$.
\end{remark}

\section{A Numerical Example}
In this section we will investigate the behavior of our Newton-type algorithm
for a simple toy optimization problem defined on a cubic complex.
For this purpose we
first consider a finite binary tree $B$ with root $r$ and with its usual
distance $\dist_B$, defined as the minimal number of edges of a path between
two nodes.
Such a tree is a uniquely geodesic space, so between any two points $b_x$
and $b_y$ there exists a unique path $b_0, b_1, \dots, b_N$ with $b_0= b_x$ and
$b_N = b_y$. This path allows use to introduce the auxiliary functions
$\gamma_{b_x, b_y}:[0, 1] \to [0, 1]$,
\begin{equation*}
  \gamma_{b_x, b_y}(x) = \left \{\begin{array}{ll}
    x & \text{ if } b_1 \text{ is a direct descendant of } b_0 \\
    1 - x & \text{ if } b_1 \text{ is the parent of } b_0.
  \end{array} \right .
\end{equation*}

We define the disjoint union $\tilde{\bbM} = \dot{\bigcap}_{b \in B}[0, 1]$ with
the semimetric
\begin{equation*}
  \dist((b_x, x), (b_y, y)) = \left \{\begin{array}{ll}
    |x - y| &\text{ if } \dist_B(b_x, b_y) = 0 \\
    \gamma_{b_x, b_y}(x) + \gamma_{b_y, b_x}(y) + \dist_B(b_x, b_y) - 1 & \text{ else. }
  \end{array} \right .
\end{equation*}
To construct the metric space, we simply identify points with distance $0$
between them, thus setting $\bbM = \tilde{\bbM}/\dist$. It is clear that this space is
compact in the topology induced by the distance as it is essentially a closed
subset of the product of finitely many compact spaces.
\begin{remark}
  It is important to note that $\bbM$ is a metric space, and not just a
  quasi-metric space.
\end{remark}
\begin{remark}
  The effect of this identification is to merge points $(b_x, 1)$ with points
  $(b_y, 0)$, if $b_y$ is a direct descendant of $b_x$.
\end{remark}
The space constructed has the structure of a cubical complex (see~\cite{BriHae99Mka}),
thus it mixes the combinatorial structure of a binary tree with the Euclidean
structure of $[0, 1]$. Figure~\ref{fig:space} is helpful in visualizing this
abstract construction.
\begin{figure}
  \centering
  \begin{tikzpicture}
    \node {$0$}
    child { node {$r$}
      child { node {$b_x$}
        child { node {$b_1$} edge from parent
          child { node {} edge from parent }
          child { node {$b_2$} edge from parent
            child { node {} edge from parent[draw=none] }
            child {
              node {$b_y$} edge from parent
              node[right] {$.75$}
            }
          }
        }
        child { node {} edge from parent}
        edge from parent node[left] {$.6$} }
      child { node {} edge from parent
        child { node {} edge from parent[draw=none] }
        child { node {} edge from parent }
      }
    };
    \node at (-0, -0.2) [draw, circle, fill=red, scale=.3] {};
    \node at (-0.4, -2.31) [draw, circle, fill=blue, scale=.3] {};
    \node at (-0.3, -6.9) [draw, circle, fill=orange, scale=.3] {};
  \end{tikzpicture}
  \caption{An example of the metric space $\bbM$ constructed using a binary
    tree. The points ${\color{red}(r, 0)}$, ${\color{blue}(b_x, .6)}$ and
    ${\color{orange}(b_y, .75)}$ are marked, together with
    the shortest path $b_x, b_1, b_2, b_y$ between $b_x$ and $b_y$}\label{fig:space}
\end{figure}
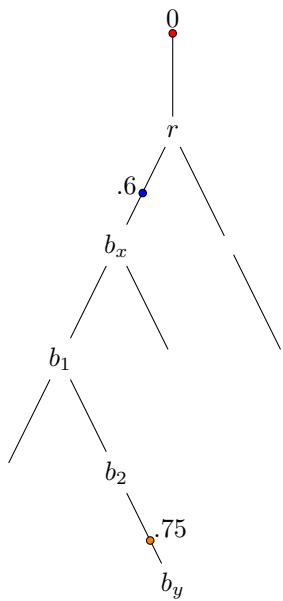
\begin{example}
  In order to compute the distance between the points $(b_x, .6) \in \bbM$ and
  $(b_y, .75) \in \bbM$ from the space $\bbM$ represented in Figure~\ref{fig:space}, we
  need to consider the path $b_x, b_1, b_2, b_y$. Since $b_1$ is a direct
  descendant of $b_x$, we compute $\gamma_{b_x, b_y}(.6) = .4$ and because $b_2$ is
  the parent of $b_y$, we compute $\gamma_{b_y, b_x}(.75) = .75$. As such
  \begin{equation*}
    \dist((b_x, .6), (b_y, .75)) = .4 + .75 + 3 - 1 = 3.15.
  \end{equation*}
\end{example}
\begin{example}
  For another clarifying computation, we can look at $\dist((r, 0), (b_x, .6))$.
  The second node in the shortest path between $r$ and $b_x$ is clearly $b_x$
  and a direct descendant of $r$, so $\gamma_{r, b_x}(0) = 1$. For the same reason,
  $\gamma_{b_x, r}(.6) = .6$. As such
  \begin{equation*}
    \dist((r, 0), (b_x, .6)) = 1 + .6 + 1 - 1 = 1.6.
  \end{equation*}
\end{example}

For the objective function, we introduce
$f:[0, \infty) \to \bbR$ a strongly convex $\mathcal{C}^\infty$ function and assume there exists a unique
minimizer of $f$ in the open interval
$(0, \max_{(b_x, x) \in \bbM} \dist((r, 0), (b_x, x)))$. It is clear that
$f''$ is not $0$ at any point in the domain of $f$. The objective
function $F:\bbM \to \bbR$ is then defined as $F(b_x, x) = f'(\dist((r, 0), (b_x, x)))$.
Solving the equation $F(b_{\xbar}, \xbar) = 0$ is then equivalent to finding a
point $(b_{\xbar}, \xbar)$ such that $f(\dist((r, 0), (b_x, x))) = \min_{x \in [0, \infty)}f(x)$,
and thus solving the minimization problem
\begin{equation}\label{eq:num example}
  \argmin_{(b_x, x) \in \bbM} f(\dist((r, 0), (b_x, x))).
\end{equation}
\begin{remark}
  The solution to~\eqref{eq:num example} always exists, but it
  is not necessary unique, in spite of
  the uniqueness of the minimizer of $f$. In fact, averaging over all possible
  trees and denoting the minimizer of $f$ by $\tbar$, the number of solutions
  to~\eqref{eq:num example} is $\mathcal{O}(\log \tbar)$.
\end{remark}

For simplicity, let us denote by $d_r:\bbM \to \bbR$ the map
$(b_x, x) \mapsto \dist((r, 0), (b_x, x))$.
Using the reverse triangle inequality, it is easy to see that this map
is Lipschitz continuous with constant $1$.

The next step in implementing the Newton-type method consists in constructing
a Newton differential. Pursuant to this, consider the single valued map
$\mathcal{H}F:\bbM \to S_1(\bbM)$
\begin{equation*}
  \mathcal{H}F(b_x, x)((b_y, y), (b_z, z)) =
  f''(d_r(b_x, x))(d_r(b_z, z) - d_r(b_y, y))
\end{equation*}
Let $(b_{\xbar}, \xbar)$ be a solution of~\eqref{eq:num example} and Taylor expand $f'$
around $d_r(b_{\xbar}, \xbar)$, concluding that there exists $h:\bbR \to \bbR$ such
that $\lim_{t \to d_r(b_{\xbar}, \xbar)}h(t) = 0$ and
\begin{equation*}
  f'(t) = f'(d_r(b_{\xbar}, \xbar)) + f''(d_r(b_{\xbar}, \xbar))
  (t - d_r(b_{\xbar}, \xbar)) + h(t)|t - \dist((r, 0), (b_{\xbar}, \xbar)|.
\end{equation*}
We check the
Newton differentiability of $F$ at $(b_{\xbar}, \xbar)$ by computing
\begin{align*}
  &\quad\frac{|F(b_x, x) - F(b_{\xbar}, \xbar) - \mathcal{H}F(b_x, x)((b_x, x), (b_{\xbar}, \xbar))|}{\dist(
  (b_x, x), (b_{\xbar}, \xbar))} \\
  &= \frac{|f'(d_r(b_x, x)) - f'(d_r(b_{\xbar}, \xbar)) - f''(d_r(b_x, x))(d_r(b_x, x) - d_r(b_{\xbar}, \xbar))
    |}{\dist((b_x, x), (b_{\xbar}, \xbar))} \\
  &\le \frac{|h(d_r(b_x, x))||d_r(b_x, x) - d_r(b_{\xbar}, \xbar)| + |f''(d_r(b_x, x)) - f''(d_r(b_{\xbar}, \xbar))||d_r(b_x, x) - d_r(b_{\xbar}, \xbar)|
    |}{\dist((b_x, x), (b_{\xbar}, \xbar))} \\
  &\le \frac{|h(d_r(b_x, x))||\dist((b_x, x), (b_{\xbar}, \xbar))| + |f''(d_r(b_x, x)) - f''(d_r(b_{\xbar}, \xbar))||\dist((b_x, x), (b_{\xbar}, \xbar))|
    |}{\dist((b_x, x), (b_{\xbar}, \xbar))} \\
  &\le |h(d_r(b_x, x))| + |f''(d_r(b_x, x)) - f''(d_r(b_{\xbar}, \xbar))|, \\
\end{align*}
and taking the limit as $(b_x, x) \to (b_{\xbar}, \xbar)$ shows
\begin{align*}
  \lim_{(b_x, x) \to (b_{\xbar}, \xbar)}&\frac{|F(b_x, x) - F(b_{\xbar}, \xbar) - \mathcal{H}F(b_x, x)((b_x, x), (b_{\xbar}, \xbar))|}{\dist(
  (b_x, x), (b_{\xbar}, \xbar))} \\
  &\le \lim_{(b_x, x) \to (b_{\xbar}, \xbar)} |h(d_r(b_x, x))| + |f''(d_r(b_x, x)) - f''(d_r(b_{\xbar}, \xbar))|, \\
  &= 0.
\end{align*}
This completes the proof that $F$ is pointwise Newton differentiable at its
roots.

In order to implement a Newton-type method one has to provide a quasi-inverse
map for $\mathcal{H}F$. For this purpose, we have to consider a point $(m, 1)$ such
that $d_r(m, 1) = \max_{(b_x, x) \in \bbM}d_r(b_x, x)$. Such a point necessary
exists because of the compactness of $\bbM$. Next, we consider the path
$\pi:[0, d_r(m, 1)] \to \bbM$ defined by
\begin{equation*}
  \pi(t) = (b_{\lfloor t \rfloor}, t - \lfloor t \rfloor),
\end{equation*}
where $\lfloor t \rfloor$ is the integer part of $t$ and $b_0, b_1,\dots,b_{\lfloor d_r(m, 1) \rfloor}$ is a
path between $r$ and $m$ in the binary tree, such that $b_0 = r$ and
$b_{\lfloor d_r(m, 1) \rfloor } = m$.

Using this map, we can define the quasi-inverse of the Newton differential as
\begin{equation*}
  \mathcal{H}F^{-}(b_x, x)((b_y, y), v)
  = \left \{\begin{array}{ll}
    \pi(d_r(b_y, y) + f''(d_r(b_x, x))^{-1}v)
    & \text{ if } d_r(b_y, y) + f''(d_r(b_x, x))^{-1}v \in [0, d_r(m, 1)] \\
    (r, 0)
    &\text{ if } d_r(b_y, y) + f''(d_r(b_x, x))^{-1}v < 0 \\
    (m, 1)
    &\text{ if } d_r(b_y, y) + f''(d_r(b_x, x))^{-1}v > d_r(m, 1).
  \end{array} \right .
\end{equation*}

For convenience, let us introduce the function $\clamp:\bbR \to [0, d_r(m, 1)]$
defined by $\clamp(t) = \min \{ \max \{ t, 0\}, d_r(m, 1)\}$. It is clear that
this map is Lipschitz continuous with constant $1$, and that
\begin{equation*}
  \mathcal{H}F^{-}(b_x, x)((b_y, y), v)
  = \pi(\clamp(d_r(b_y, y) + f''(d_r(b_x, x))^{-1}v))).
\end{equation*}
It also follows from the definition that
\begin{equation}\label{eq:proof psinv 1}
  d_r(\mathcal{H}F^{-}(b_x, x)((b_y, y), v)) = \clamp(d_r(b_y, y) + f''(d_r(b_x, x))^{-1}v),
\end{equation}
and that
\begin{equation}\label{eq:proof psinv 2}
  \dist(\mathcal{H}F^{-}(b_x, x)((b_y, y), v), \mathcal{H}F^{-}(b_x, x)((b_z, z), w))
  = |d_r(\mathcal{H}F^{-}(b_x, x)((b_z, z), w)) - d_r(\mathcal{H}F^{-}(b_x, x)((b_y, y), v))|,
\end{equation}
since all the point in the image of $\mathcal{H}F^{-}(b_x, x)$ belong to the same
path from the root of the tree to $m$.

Pursuant to proving that the map $\mathcal{H}F^{-}(b_x, x)$ is indeed the quasi-inverse
of $\mathcal{H}F(b_x, x)$, we combine~\eqref{eq:proof psinv 1}
with~\eqref{eq:proof psinv 2} and the Lipschitz continuity of $\clamp$ to
obtain
\begin{align*}
  \dist(\mathcal{H}F^{-}(b_x, x)
  &((b_y, y), v), \mathcal{H}F^{-}(b_x, x)((b_y, y), v)) \\
  &= |d_r(\mathcal{H}F^{-}(b_x, x)((b_z, z), w)) - d_r(\mathcal{H}F^{-}(b_x, x)((b_y, y), v))| \\
  &= |\clamp(d_r(b_z, z) + f''(d_r(b_x, x))^{-1}w) - \clamp(d_r(b_y, y) + f''(d_r(b_x, x))^{-1}v)| \\
  &\le |(d_r(b_z, z) + f''(d_r(b_x, x))^{-1}w) - (d_r(b_y, y) + f''(d_r(b_x, x))^{-1}v)| \\
  &= |f''(d_r(b_x, x))^{-1}||-v + w + f''(d_r(b_x, x))(d_r(b_z, z) - d_r(b_y, y))| \\
  &= |f''(d_r(b_x, x))^{-1}||-v + w + \mathcal{H}F(b_x, x)((b_y, y), (b_z, z))|,
\end{align*}
completing the proof.
\begin{remark}
  The need to pick one distinguished path, $\pi$, in order to construct the
  quasi-inverse is indicative the necessity for rather ad hoc
  constructions in the field of optimization on metric spaces.
\end{remark}

From this calculation, it is clear that $\||\mathcal{H}F^{-}(b_x, x)\|| \le
|f''(d_r(b_x, x))^{-1}|$ and since $f \in \mathcal{C}^\infty$ and $\bbM$ is compact, we can
conclude that $\||\mathcal{H}F^{-}(b_x, x)\||$ is bounded. As such, all the assumptions
of Theorem~\ref{thm:conv-newton-diff-metric} are satisfied. This proves that, for any
$(b_{x_0}, x_0)$ close enough to a $(b_{\xbar}, \xbar)$, the iteration
\begin{equation*}
  (b_{x_{k + 1}}, x_{k + 1})) =
  \pi(\clamp(d_r(b_{x_k}, x_k) - f''(d_r(b_{x_k}, x_k))^{-1}f'(d_r(b_{x_k}, x_k)))),
\end{equation*}
converges superlinearly to $(b_{\xbar}, \xbar)$.

\section{Conclusions and Limitations}
The results of this work are formulated in quite general settings and are
intended primarily to serve as a stepping stone to a more practical optimization
theory in metric spaces. The necessary algebraic constructs for casting an
optimization problem as a root-finding problem are still under active research
and will be the subject of a following article. While the general metric theory
is still under development, the results of this article, in particular the
calculus of Newton differentiability and the Kantorovich theorem, can be applied
to nonsmooth problems on nonsmooth subsets of Euclidean spaces. In this context,
for a practical application of the Kantorovich result, numerical simulation for
the existence of the function $f$ can be implemented. Compared to actually
running a Newton-type method, this numerical certificate of existence for a
root has the advantage that its computational complexity does not depend on the
dimension of the ambient space.

\section{Acknowledgments}
An earlier version of this work was part of the author's PhD thesis, completed at
the University of G\"ottingen under the supervision of D. Russell Luke. The author
would like to thank D. Russell Luke for his guidance and advice during the
undertaking of the PhD. The author would also like to thank his
postdoc mentor, Sorin-Mihai Grad. This work has been partially founded
by ANR-22-EXES-0013.

{\printbibliography}

\end{document}